\theoremstyle{definition}
\newtheorem{thm}{Theorem}
\newtheorem{prop}{Proposition}
\newtheorem{lem}{Lemma}
\newtheorem{defn}{Definition}
\newtheorem{definition}[thm]{Definition}
\def\id{\mathop{\mathrm{id}}\nolimits}
\newcommand{\pder}[2]{\frac{\partial #1}{\partial #2}}
\newcommand{\Span}[0]{\text{span}}
\title{Variational integrators for interconnected Lagrange--Dirac systems}
\author{Helen Parks}
\author{Melvin Leok}
\address{Department of Mathematics, University of California, San Diego, 9500 Gilman Drive, La Jolla, California, USA.}
\email{parks.helen@gmail.com, mleok@math.ucsd.edu}
\begin{document}
\maketitle

\begin{abstract}
Interconnected systems are an important class of mathematical models, as they allow for the construction of complex, hierarchical, multiphysics, and multiscale models by the interconnection of simpler subsystems. Lagrange--Dirac mechanical systems provide a broad category of mathematical models that are closed under interconnection, and in this paper, we develop a framework for the interconnection of discrete Lagrange--Dirac mechanical systems, with a view towards constructing geometric structure-preserving discretizations of interconnected systems. This work builds on previous work on the interconnection of continuous Lagrange--Dirac systems \cite{JaYo2014} and discrete Dirac variational integrators \cite{LeOh2011}. We test our results by simulating some of the continuous examples given in \cite{JaYo2014}.
\end{abstract}

\section{Introduction}
This work is motivated in part by a desire to develop a geometric structure-preserving simulation framework with which to model control systems by using interconnections. By interconnection, we mean a Dirac structure, which is a generalization of symplectic and Poisson structures that can geometrically encode the nonholonomic constraints between subsystems. The need for robust control of mechanical systems is perhaps one of the most common reasons for viewing a system in terms of interconnections. We have a plant system whose behavior we wish to control, so it must be mechanically or electrically joined to a controller system. Hence, we have an interconnected system. Since the controlling device is often itself a mechanical system, we have the interconnection of two mechanical systems, and we can begin to study the structure of the interconnected, controlled system as it relates to the structures of the starting plant and controller. The field of port-Hamiltonian systems and the associated feedback stabilization control paradigm, Interconnection and Damping Assignment - Passivity-Based Control (IDA-PBC), undertakes just such an approach and is already a very well-established methodology with an extensive range of results \cite{VdS2006, DuMaStBr2009}, and which can be viewed as being dual to the method of controlled Lagrangians~\cite{Ch2014}.

As the name suggests, port-Hamiltonian systems adopt a Hamiltonian perspective on interconnected systems. In \citet{YoMa2006a, YoMa2006b}, Lagrange--Dirac mechanics were developed as a way of understanding the implicit systems central to port-Hamiltonian systems from the Lagrangian perspective. That aim is rooted partially in the natural desire to understand implicit systems from both classical perspectives. It also moves toward the goal of numerically simulating interconnections and control by interconnection using structured computational methods via variational integrators. Variational integrators have been developed for a broad class of problems, including, \citet{LaWe2006, LeZh2011} for Hamiltonian systems; \citet{FeMaOrWe2003} for nonsmooth problems with collisions; \citet{MaPaSh1998, LeMaOrWe2003} for Lagrangian PDEs; \citet{CoMa2001,McPe2006, FeZe2005} for nonholonomic systems; \citet{BoOw2009, BoOw2010} for stochastic Hamiltonian systems; \citet{LeLeMc2007, LeLeMc2009, BoMa2009} for problems on Lie groups and homogeneous spaces; \citet{LeOh2011} for Lagrange--Dirac mechanical systems. However, most of the work on variational integrators has adopted the Lagrangian as opposed to the Hamiltonian perspective, and this is the approach that we will adopt as well in this paper.

%While variational integrators cover a wide range of numerical schemes but are most often derived from the Lagrangian perspective.

The next steps were taken in \citet{JaYo2014}, which develop continuous interconnections of Lagrange--Dirac systems, and in \citet{LeOh2011}, where variational integrators were extended to the Lagrange--Dirac case. The discrete Lagrange--Dirac mechanics introduced in \citet{LeOh2011} can be viewed as a generalization of the discrete nonholonomic mechanics introduced by \citet{CoMa2001} to the setting of degenerate systems, which yields an implicit version of the discrete equations of motion. This implicit system of equations is analogous to rewriting the second-order Lagrange--d'Alembert equations of continuous nonholonomic mechanics \cite{Bl2003} in first-order form by introducing the Legendre transformation. In addition, it also provides an alternative derivation of the discrete equations of motion in terms of an associated discrete Dirac structure. In this paper, we discretize the interconnections of \cite{JaYo2014} in accordance with the framework laid out in \cite{LeOh2011}, and describe how this is achieved both in terms of discrete variational principles and discrete Dirac structures.

While our study of interconnected systems has very specific roots, we have abstracted our way to general interconnections (following \cite{JaYo2014}) and believe that our results have useful applications outside the realm of plant/controller interconnection. It is natural to approach the modeling of a large, complex system by breaking it into smaller, more easily understood components. The full system can then be modeled as the interconnection of several simpler, component-wise models. Sometimes our engineering objectives themselves are modular, such as with a robot in need of several different appendages, each with a specific function. Interconnection through the use of Dirac structures provides a mathematical framework for modeling such modular designs in a natural fashion, and may reduce the incremental cost of constructing full system models when the appendages are changed, since the model of the appendage subsystem can be swapped out without the need to modify the rest of the mathematical model.

More generally, this can allow the reusability and exchange of commonly used model subsystems, and provide the basis for constructing more complicated models by assembling and interconnecting model subsystems, instead of constructing each new model monolithically from scratch. 
%alter the models of each appendage individually with only minimal effort to reconstruct a full system model. 
This also naturally leads to a framework for developing parallel and distributed numerical implementations of such structure-preserving simulations. As with all modular, parallel, and distributed computations, the efficiency of such a modeling and simulation approach is dependent on choosing a decomposition of the full system into component subsystems that involve minimal coupling between subsystems, otherwise the interconnection and communications overhead can outweigh the benefits of decomposing the model and simulation.

\section{Background}

\subsection{Dirac structures and Langrange--Dirac mechanics}

Dirac structures are the simultaneous generalization of symplectic and Poisson structures, and can encode Dirac constraints that arise in degenerate Lagrangian systems, interconnected systems, and nonholonomic systems, and thereby provide a unified geometric framework for studying such problems. We begin with a review of Dirac structures and their role in Lagrange--Dirac mechanics. Then, we revisit the continuous interconnection process. 

\subsubsection{Dirac structures}

Let $V$ be a finite-dimensional vector space with dual $V^*$. Denote the natural pairing between $V$ and $V^*$ by $\langle \cdot, \cdot \rangle$, and define the symmetric pairing $\langle\langle \cdot, \cdot \rangle\rangle$ on $V\oplus V^*$ by 
\begin{equation}
\langle \langle(v_1,\alpha_1),(v_2,\alpha_2)\rangle\rangle = \langle \alpha_1, v_2 \rangle + \langle \alpha_2, v_1 \rangle
\end{equation}
for $(v_1,\alpha_1), (v_2,\alpha_2) \in V \oplus V^*$. A Dirac structure on $V$ is a subset $D \subset V\oplus V^*$ such that $D = D^\perp$ with respect to $\langle\langle \cdot , \cdot \rangle\rangle$. Given a subspace $\Delta\subset V$ and its annihilator $\Delta^\circ= \{ \alpha \in V^* \ | \ \langle \alpha,v \rangle = 0 \ \text{for all} \ v\in\Delta \}\subset V^*$, we can construct $\Delta\oplus\Delta^\circ\subset V\oplus V^*$, which is an example of a Dirac structure.

Now, let $M$ be a smooth manifold. Denote by $TM\oplus T^*M$ the Pontryagin bundle over $M$, where the fiber over $x\in M$ is $T_xM \oplus T_x^*M$. Then, a \emph{Dirac structure on $M$} is a subbundle $D\subset TM\oplus T^*M$ such that every fiber $D(x)$ is a Dirac structure on $T_xM$. An \emph{integrable Dirac structure} has the additional property, $\langle\pounds_{X_1} \alpha_2, X_3\rangle+\langle\pounds_{X_2} \alpha_3, X_1\rangle+\langle\pounds_{X_3} \alpha_1, X_2\rangle=0,$
for all pairs of vector fields and one-forms $(X_1,\alpha_1),(X_2,\alpha_2),(X_3,\alpha_3)\in D$, where $\pounds_X$ is the Lie derivative. This generalizes  the condition that the symplectic two-form is closed, or that the Poisson bracket satisfies Jacobi's identity. For the purposes of this paper, we will not assume that a Dirac structure satisfies the integrability condition, since it does not hold for Dirac structures that incorporate non-integrable or nonholonomic constraints. It should be noted that such non-integrable Dirac structures are sometimes referred to in the literature as almost-Dirac structures.

Every manifold Dirac structure $D$ has an associated distribution defined by
\begin{equation}
\Delta_D(x) = \{ v \in T_xM \ | \ (v,\alpha) \in D(x) \ \text{for some} \ \alpha\in T_x^* M\}.
\end{equation}
The Dirac structure $D$ also defines a bilinear map on $\Delta_D$,
\begin{equation}
\omega_{\Delta_D}(v,u) = \langle \alpha_v, u \rangle,
\end{equation}
for any $\alpha_v$ such that $(v,\alpha_v) \in D(x)$ and any $u \in \Delta_D(x)$. The two-form $\omega_{\Delta_D}$ is well-defined on $\Delta_D$ even if there exist multiple such $\alpha_v$ since $D = D^\perp$ with respect to the symmetric pairing above.

Conversely, given a two-form $\omega$ on $M$ and a regular distribution $\Delta\subset TM$, we can define a Dirac structure $D$ on $M$ fiber-wise as
\begin{equation}
D(x) = \{(v,\alpha) \in T_xM \oplus T_x^*M \ | \ v \in \Delta(x) \ \text{and} \ \langle \alpha, u\rangle = \omega_x(v,u) 
\text{ for all} \ u \in \Delta(x)\}.
\label{fiberwise_Dirac_def}
\end{equation}
Clearly, in this case $\Delta_D = \Delta$ and $\omega_{\Delta_D} = \omega |_{\Delta_D}$. We use this idea to connect Dirac structures with constraint distributions.

\subsubsection{Induced Dirac structures}

Dirac structures are especially relevant in the case of Lagrangian systems with linear nonholonomic constraints, i.e. constraints of the form $\omega^a(q)\cdot\dot{q} = 0$, $a = 1,\dots,m$, where $\omega^a$ are one-forms on $Q$. The interested reader is referred to \citet{Bl2003} for a more in-depth discussion of nonholonomic mechanics and constraints. Such constraints can be equivalently expressed using the regular distribution $\Delta_Q\subset TQ$ defined by $\Delta_Q(q) = \cap_a \ker(\omega^a(q))$. Thus, the annihilator codistribution of $\Delta_Q$ is given by $\Delta_Q^\circ(q) = \Span\{\omega^a(q)\}$. The constraints are then written $\dot{q} \in \Delta_Q(q)$ or simply $\dot{q} \in \Delta_Q$. Nonholonomic constraints such as these cause the motion on $T^*Q$ to be pre-symplectic rather than symplectic. The Dirac structure induced by $\Delta_Q$ gives a precise description of this pre-symplectic structure. Note that we may also have primary constraints on $T^*Q$ if $L$ is degenerate.

The constraints $\Delta_Q$ induce a Dirac structure on $T^*Q$ as follows. From $\Delta_Q$, define $\Delta_{T^*Q} \subset TT^*Q$ as
\begin{equation}
\Delta_{T^*Q} = (T\pi_Q)^{-1}(\Delta_Q)
\end{equation}
for the canonical projection $\pi_Q : T^*Q \to Q$ and its tangent lift $T\pi_Q$. This definition will become clearer in the next section, when we discuss the representation in local coordinates. We now apply the construction described in \eqref{fiberwise_Dirac_def} using $\Delta_{T^*Q}$ and the canonical symplectic form $\Omega$ on $T^*Q$. This gives the following fiber-wise definition of $D_{\Delta_Q}$, the Dirac structure on $T^*Q$ induced by the constraint distribution $\Delta_Q$.
\begin{equation}
\begin{aligned}
D_{\Delta_Q}(q,p) &= \{(v,\alpha)\in T_{(q,p)}T^*Q \oplus T_{(q,p)}^*T^*Q \ | \ v \in \Delta_{T^*Q}(q,p) \ \text{and} \\
&\qquad \langle \alpha, u \rangle = \Omega(v,u) \ \text{for all} \ u\in \Delta_{T^*Q}(q,p)\}.
\label{induced_Dirac_def}
\end{aligned}
\end{equation}

\subsubsection{Canonical local coordinate expressions}

It will be useful to have expressions for $\Delta_{T^*Q}, \Delta_{T^*Q}^\circ$, and $D_{\Delta_Q}$ in terms of local canonical coordinates. Let $V$ be a model vector space for the configuration manifold $Q$, and let $U\subset V$ be a chart around $q\in Q$. Then, we have the following local representations near $q$,
\begin{align*}
TQ \mapsto U\times V,\\
T^*Q \mapsto U\times V^*,\\
TTQ \mapsto (U\times V)\times(V\times V),\\
TT^*Q \mapsto (U\times V^*)\times (V\times V^*),\\
T^*T^*Q \mapsto (U\times V^*)\times (V^*\times V).
\end{align*}
In these coordinates $\pi_Q : (q,p)\mapsto q$ and $T\pi_Q: (q,p,\delta q, \delta p)\mapsto (q,\delta q)$, so that
\begin{equation}
\Delta_{T^*Q} = \{(q,p,\delta q, \delta p) \in T_{(q,p)}T^*Q \ | \ (q,\delta q) \in \Delta_Q\},
\end{equation}
and the annihilator distribution is given by
\begin{equation}
\Delta_{T^*Q}^\circ(q,p) = \{(q,p,\alpha_q,\alpha_p) \in T_{(q,p)}^*T^*Q \ | \ (q,\alpha_q) \in \Delta_Q^\circ \ \text{and} \ \alpha_p = 0\}.
\end{equation}
As indicated above, any $v\in T_{(q,p)}T^*Q$ has two coordinate components. We will write these as $(\delta q, \delta p)$ in the abstract case or $(v_q,v_p)$ when referring to a particular $v$. Similarly, we will write $\alpha = (\alpha_q,\alpha_p)$ for $\alpha \in T_{(q,p)}^*T^*Q$. In this notation, $\Omega(v,u) = v_q\cdot u_p - v_p \cdot u_q$. So the condition $\langle \alpha, u\rangle = \Omega(v,u)$ for all $u \in \Delta_{T^*Q}$ translates to $(\alpha_q+v_p,\alpha_p-v_q) \in \Delta_{T^*Q}^\circ$. Thus, the induced Dirac structure in \eqref{induced_Dirac_def} has the coordinate expression
\begin{equation}
\begin{aligned}
D_{\Delta_Q}(q,p) &= \{(v_q,v_p,\alpha_q,\alpha_p)\in T_{(q,p)}T^*Q \oplus T_{(q,p)}^*T^*Q \ | \ v_q \in \Delta_{Q}(q),\\ 
&\qquad\alpha_p = v_q, \ \text{and} \  \alpha_q + v_p \in \Delta_Q^\circ(q)\}.
\end{aligned}
\label{DiracStructCoord}
\end{equation}

\subsubsection{The Tulczyjew triple}

The Tulczyjew triple relates the spaces $T^*T^*Q$, $TT^*Q$, and $T^*TQ$ and helps bridge the gap between Lagrangian and Hamiltonian mechanics. These maps were first studied by Tulczyjew~\cite{Tu1977} in the context of a generalized Legendre transform. The first map is the usual flat map derived from the symplectic form $\Omega$ on $T^*Q$. We write $\Omega^\flat  : TT^*Q \to T^*T^*Q$ defined by
\begin{equation}
\Omega^\flat(v) \cdot u = \Omega(v,u).
\end{equation}
In coordinates,
\begin{equation}
\Omega^\flat(v) = (-v_p, v_q) \in T^*T^*Q.
\end{equation}
The second map, $\kappa_Q : TT^*Q \to T^*TQ$ is given locally by a permutation,
\begin{equation}
\kappa_Q : (q,p,\delta q, \delta p) \mapsto (q,\delta q, \delta p, p).
\end{equation}
A global definition of $\kappa_Q$ can be found in \cite{YoMa2006a}. A unique diffeomorphism $\kappa_Q$ exists for any manifold $Q$ \cite{YoMa2006a}. The third map, $\gamma_Q : T^*TQ \to T^*T^*Q$ is defined in terms of the first two,
\begin{equation}
\gamma_Q :=  \Omega^\flat \circ \kappa_Q^{-1}.
\end{equation}

\subsubsection{Lagrange--Dirac dynamical systems}

We are now equipped to define a Lagrange--Dirac dynamical system. Let $L: TQ \to \mathbb{R}$ be a given, possibly degenerate, Lagrangian. We define the Dirac differential of $L$ to be
\begin{equation}
\mathfrak{D}L(q,v) := \gamma_Q \circ \textbf{d} : TQ \to T^*T^*Q.
\end{equation}
Here $\textbf{d}$ denotes the usual exterior derivative operator so that $\textbf{d}L : TTQ \to T^*TQ$. For a curve $(q(t),v(t),p(t))\in TQ\oplus T^*Q$, we define $X_D$ to be the following partial vector field
\begin{equation}
X_D(q(t),v(t),p(t)) = (q(t),p(t),\dot{q}(t),\dot{p}(t)) \in TT^*Q.
\end{equation}
Then, the equations of motion for a Lagrange--Dirac dynamical system with Lagrangian $L$ and constraint distribution $\Delta_Q$ are given by
\begin{equation}
(X_D(q(t),v(t),p(t)), \mathfrak{D}L(q(t),v(t))) \in D_{\Delta_Q}(q(t),p(t)).
\label{Dirac_eqns_structure}
\end{equation}
In local coordinates, $\textbf{d}L(q,v) = (q,v, \pder{L}{q},\pder{L}{v})$ and
\begin{equation}
\gamma_Q : (q,\delta q, \delta p, p) \mapsto (q,p,-\delta p, \delta q),
\end{equation}
so we have
\begin{equation}
\mathfrak{D}L(q,v) = (q,\pder{L}{v},-\pder{L}{q},v).
\end{equation}
Then, using the coordinate expressions from \eqref{DiracStructCoord}, the equations determined by \eqref{Dirac_eqns_structure} are
\begin{equation}
\dot{q} = v \in \Delta_Q(q), \ \dot{p} - \pder{L}{q} \in \Delta_Q^\circ(q), \ p = \pder{L}{v}.
\label{UnforcedDiracCoords}
\end{equation}
The last equation comes from matching the basepoints of $X_D(q,p)$ and $\mathfrak{D}L(q,v)$. This is a set of differential algebraic equations on $TQ\oplus T^*Q$ whereas the Euler--Lagrange equations give an ODE system on $TQ$. We see that the first and last equations explicitly enforce the second-order curve condition and the Legendre transform, respectively. The middle equation reduces to the Euler--Lagrange equations in the absence of constraints. With constraints, the Euler--Lagrange relationship holds along the permissible directions. Explicit enforcement of the Legendre transform serves to enforce any primary constraints on the system.

\subsubsection{The Hamilton--Pontryagin principle}

Rather than the usual Hamilton's principle for curves on $TQ$, we apply the Hamilton--Pontryagin principle for curves on $TQ\oplus T^*Q$. This automatically incorporates a constraint distribution $\Delta_Q$ and any primary constraints coming from a degenerate Lagrangian. We have
\begin{equation}
\delta \int_0^T L(q(t),v(t)) - \langle p(t), \dot{q}(t) - v(t)\rangle \ dt = 0,
\label{HPprinciple}
\end{equation}
for variations $\delta q \in \Delta_Q(q)$ with fixed endpoints and arbitrary variations $\delta v, \delta p$ together with the constraint $\dot{q} \in \Delta_Q(q)$. This principle yields precisely the Lagrange--Dirac equations of motion \eqref{UnforcedDiracCoords}.

\subsubsection{The Lagrange--d'Alembert--Pontryagin principle and Lagrange--Dirac systems with external forces}

Suppose we have an external force field $F: TQ \to T^*Q$ acting on the system. As in the classical Lagrangian case \cite{MaRa1999}, we take the horizontal lift of $F$ to define $\tilde{F} : TQ \to T^*T^*Q$ by
\begin{equation}
\langle \tilde{F}(q,v), w\rangle = \langle F(q,v), T\pi_Q(w)\rangle.
\end{equation}
In local coordinates, $\tilde{F}(q,v) = (q,p,F(q,v),0)$. The equations of motion for the forced system are given by
\begin{equation}
(X_D(q,v,p), \mathfrak{D}L(q,v) - \tilde{F}(q,v)) \in D_{\Delta_Q}(q,p).
\end{equation}
As before, we can derive the local coordinate equations from this, producing
\begin{equation}
\dot{q} = v \in \Delta_Q(q), \ \dot{p} - \pder{L}{q} - F \in \Delta_Q^\circ(q), \ p = \pder{L}{v}.
\label{ForcedDiracCoords}
\end{equation}
So, only the second equation changes when forces are introduced. Equations \eqref{ForcedDiracCoords} reduce to the usual forced Euler--Lagrange equations in the absence of constraints.

We must also incorporate the work of the forces into the variational principle. This is done in exactly the same way as forces are appended to Hamilton's principle in the usual forced Lagrangian setting \cite{MaWe2001}. In that setting, one obtains the Lagrange--d'Alembert principle. Here, we arrive at the \emph{Lagrange--d'Alembert--Pontryagin principle},
\begin{equation}
\delta \int_0^T L(q,v) + \langle p, \dot{q} - v\rangle \ dt + \int_0^T \langle F(q,v),\delta q\rangle \ dt = 0,
\label{LDAprinciple}
\end{equation}
for variations $\delta q \in \Delta_Q(q)$ with fixed endpoints and arbitrary variations $\delta v, \delta p$ together with the constraint $\dot{q} \in \Delta_Q(q)$. The addition of the forcing terms here again produces \eqref{ForcedDiracCoords}.

\subsection{Interconnection of Lagrange--Dirac systems}
In this section we review the interconnection of continuous Lagrange--Dirac systems laid out in \citet{JaYo2014}. Throughout this section we assume that we are connecting two systems $(L^1, \Delta_{Q_1})$ on $Q_1$ and $(L^2,\Delta_{Q_2})$ on $Q_2$. The results easily extend to the interconnection of a finite number of systems, as shown in \cite{JaYo2014}. The interconnected system will then evolve on $Q = Q_1\times Q_2$. The interconnection of the two systems has both a variational formulation and a formulation in terms of the interconnection of the two starting Dirac structures, $D_{\Delta_{Q_1}}$ and $D_{\Delta_{Q_2}}$. This interconnection of Dirac structures in turn involves the direct sum of $D_{\Delta_{Q_1}}$ and $D_{\Delta_{Q_2}}$, a product on Dirac structures, and an interaction Dirac structure $D_{\text{int}}$.

\subsubsection{Standard interaction Dirac structures}

Let $\Sigma_Q\subset TQ$ be a regular distribution on $Q$ describing the interaction between systems 1 and 2. Lift this distribution to $T^*Q$ to define
\begin{equation}
\Sigma_{\text{int}} = (T\pi_Q)^{-1}(\Sigma_Q) \subset TT^*Q.
\end{equation}
Then, the \emph{standard interaction Dirac structure} $D_{\text{int}}$ on $T^*Q$ is given by
\begin{equation}
D_{\text{int}}(q,p) = \Sigma_{\text{int}}(q,p)\oplus \Sigma_{\text{int}}^\circ(q,p),
\end{equation}
for $\Sigma_{\text{int}}^\circ$ the annihilator of $\Sigma_{\text{int}}$.

As mentioned above, any Dirac structure on a manifold $M$ defines an associated distribution $\Delta_M \subset TM$ and a bilinear map $\omega_{\Delta_M} : \Delta_M \times \Delta_M \to \mathbb{R}$ that is well-defined on $\Delta_M$. Taking $D = \Delta \oplus \Delta^\circ$ produces $\Delta_M = \Delta$ and $\omega_{\Delta_M} \equiv 0$. Thus, the distribution associated with $D_{\text{int}}$ is $\Sigma_{\text{int}}$, and the associated two-form is the zero form. The zero form obviously extends to the whole of $T^*Q$, so $D_{\text{int}}$ can equivalently be generated from $\Sigma_{\text{int}}$ and $\omega \equiv 0$.

\subsubsection{The direct sum of Dirac structures}

Given two Dirac structures $D_1$ and $D_2$ on $M_1$ and $M_2$, the direct sum $D_1\oplus D_2$ is the vector bundle over $M_1\times M_2$ given by
\begin{multline}
D_1\oplus D_2 (x_1,x_2) = 
 \{((v_1,v_2),(\alpha_1,\alpha_2)) \in T_{(x_1,x_2)}(M_1\times M_2) \oplus T_{(x_1,x_2)}^*(M_1\times M_2) \ | \\ 
 (v_1,\alpha_1) \in D_1(x_1) \ \text{and} \ (v_2,\alpha_2) \in D_2(x_2)\}.
\end{multline}
From \cite{JaYo2014}, we have that $D_1\oplus D_2$ is itself a Dirac structure over $M_1\times M_2$. In the particular case of induced Dirac structures, it was shown in \cite{JaYo2014} that $D_{\Delta_{Q_1}}\oplus D_{\Delta_{Q_2}} = D_{\Delta_{Q_1}\oplus \Delta_{Q_2}}$.

\subsubsection{The tensor product of Dirac structures}

The interconnection of Dirac structures relies on a product operation on Dirac structures referred to as the \emph{Dirac tensor product}. We have the following characterization of the Dirac tensor product.
\begin{defn}[\citet{JaYo2014}] Let $D_a$ and $D_b$ be Dirac structures on $M$. We define the Dirac tensor product
\begin{equation}
\begin{aligned}
D_a\boxtimes D_b &= \{ (v,\alpha) \in TM\oplus T^*M \ | \ \exists \beta \in T^*M \\
&\qquad\text{such that} \ (v,\alpha+\beta) \in D_a , (v,-\beta) \in D_b\}.
\end{aligned}
\end{equation}
\end{defn}
An equivalent definition is given in \cite{Gu2011}. Let $D_{\Delta}$ be an induced Dirac structure on $Q$ and $D_{\text{int}}$ the standard interaction Dirac structure defined above. Then, $D_\Delta \boxtimes D_{\text{int}}$ is a Dirac structure when $\Delta \cap \Sigma_Q$ is a regular distribution \cite{JaYo2014}.

\subsubsection{Interconnection of Dirac structures}

Recall that we wish to connect the systems $(L^1, \Delta_{Q_1})$ and $(L^2, \Delta_{Q_2})$ with associated Dirac structures $D_{\Delta_{Q_1}}$ and $D_{\Delta_{Q_2}}$. The smooth distribution $\Sigma_Q\subset TQ$ describes their interaction and is used to define the interaction Dirac structure $D_{\text{int}} = \Sigma_{\text{int}} \oplus \Sigma_{\text{int}}^\circ$, where $\Sigma_{\text{int}} = (T\pi_Q)^{-1}(\Sigma_Q)\subset TT^*Q$. As before, $Q = Q_1\times Q_2$ will be the configuration manifold of the interconnected system.

Given two Dirac structures $D_a$ and $D_b$ on $Q_a$ and $Q_b$, respectively, and an interaction Dirac structure $D_{\text{int}}$ on $Q = Q_a\times Q_b$, the \emph{interconnection of $D_a$ and $D_b$ through $D_{\text{int}}$} is
\begin{equation}
(D_a\oplus D_b)\boxtimes D_{\text{int}}.
\end{equation}

We noted above that $D_{\Delta_{Q_1}} \oplus D_{\Delta_{Q_2}} = D_{\Delta_{Q_1}\oplus\Delta_{Q_2}}$. We have the following proposition for the interconnection of $D_{\Delta_{Q_1}}$ and $D_{\Delta_{Q_2}}$ through the standard interaction Dirac structure $D_{\text{int}} = \Sigma_{\text{int}}\oplus \Sigma_{\text{int}}^\circ$. %(Recall that $\Sigma_Q \subset TQ$ and $\Sigma_{\text{int}} = (T\pi_Q)^{-1}(\Sigma_Q) \subset TT^*Q$.)
\begin{prop}[\citet{JaYo2014}] If $\Delta_{Q_1}\oplus \Delta_{Q_2}$ and $\Sigma_Q$ intersect cleanly, i.e., $(\Delta_{Q_2}\oplus \Delta_{Q_2})\cap\Sigma_Q$ has locally constant rank, then the interconnection of $D_{\Delta_{Q_1}}$ and $D_{\Delta_{Q_2}}$ through $D_{\text{int}}$ is locally given by the Dirac structure induced from $(\Delta_{Q_2}\oplus \Delta_{Q_2})\cap\Sigma_Q$ as, for each $(q,p)\in T^*Q$,
\begin{multline}
(D_{\Delta_{Q_1}}\oplus D_{\Delta_{Q_2}})\boxtimes D_{\text{int}}(q,p) = \{ (v,\alpha)\in T_{(q,p)}T^*Q\times T_{(q,p)}^*T^*Q \ | \\
v\in \Delta_{T^*Q}(q,p) \ \text{and} \ \alpha - \Omega^\flat(q,p)\cdot v \in \Delta_{T^*Q}^\circ(q,p)\},
\end{multline}
where $\Delta_{T^*Q} = (T\pi_Q)^{-1}((\Delta_{Q_2}\oplus \Delta_{Q_2})\cap\Sigma_Q)$ and $\Omega = \Omega_1\oplus\Omega_2$, where $\Omega_1$ and $\Omega_2$ are the canonical symplectic structures on $T^*Q_1$ and $T^*Q_2$.
\end{prop}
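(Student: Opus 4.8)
The plan is to prove the proposition by unwinding the definitions of the direct sum, the standard interaction Dirac structure, and the Dirac tensor product, and then showing the resulting set matches the induced Dirac structure attached to the intersected distribution. First I would assemble the ingredients in coordinates. By the remark preceding the statement, $D_{\Delta_{Q_1}}\oplus D_{\Delta_{Q_2}} = D_{\Delta_{Q_1}\oplus\Delta_{Q_2}}$, so by the coordinate formula \eqref{DiracStructCoord} the left factor consists of pairs $(v,\alpha)$ with $v_q \in (\Delta_{Q_1}\oplus\Delta_{Q_2})(q)$, $\alpha_p = v_q$, and $\alpha_q + v_p \in (\Delta_{Q_1}\oplus\Delta_{Q_2})^\circ(q)$. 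The interaction factor $D_{\text{int}} = \Sigma_{\text{int}}\oplus\Sigma_{\text{int}}^\circ$ consists of pairs $(v,\beta)$ with $v \in \Sigma_{\text{int}}(q,p)$ and $\beta \in \Sigma_{\text{int}}^\circ(q,p)$; since $\Sigma_{\text{int}} = (T\pi_Q)^{-1}(\Sigma_Q)$, this means $v_q \in \Sigma_Q(q)$ (with $v_p$ free) and, by the annihilator computation analogous to the one for $\Delta_{T^*Q}^\circ$, that $\beta_q \in \Sigma_Q^\circ(q)$ with $\beta_p = 0$.

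Next I would impose the tensor-product condition from the Definition: $(v,\alpha) \in (D_{\Delta_{Q_1}}\oplus D_{\Delta_{Q_2}})\boxtimes D_{\text{int}}$ exactly when there exists $\beta$ with $(v,\alpha+\beta)$ in the first factor and $(v,-\beta)$ in the interaction factor. Combining the two coordinate descriptions, the membership $(v,-\beta)\in D_{\text{int}}$ forces $v_q \in \Sigma_Q(q)$ and constrains $\beta$ to satisfy $\beta_p = 0$ and $\beta_q \in \Sigma_Q^\circ(q)$. The membership $(v,\alpha+\beta)\in D_{\Delta_{Q_1}\oplus\Delta_{Q_2}}$ then forces $v_q \in (\Delta_{Q_1}\oplus\Delta_{Q_2})(q)$, together with $\alpha_p + \beta_p = v_q$ and $\alpha_q + \beta_q + v_p \in (\Delta_{Q_1}\oplus\Delta_{Q_2})^\circ(q)$. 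Since $\beta_p = 0$, the first of these collapses to $\alpha_p = v_q$, matching the second-order condition. The two conditions on $v_q$ together give precisely $v_q \in (\Delta_{Q_1}\oplus\Delta_{Q_2})\cap\Sigma_Q$, i.e.\ $v\in\Delta_{T^*Q}(q,p)$ for the $\Delta_{T^*Q}$ named in the statement.

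The remaining work is to eliminate $\beta_q$ and identify the covector condition with $\alpha - \Omega^\flat(q,p)\cdot v \in \Delta_{T^*Q}^\circ(q,p)$. Here I would use that $\beta_q$ ranges over $\Sigma_Q^\circ(q)$ while $\alpha_q + \beta_q + v_p$ is required to lie in $(\Delta_{Q_1}\oplus\Delta_{Q_2})^\circ(q)$; existence of such a $\beta_q$ is equivalent to $\alpha_q + v_p$ lying in $(\Delta_{Q_1}\oplus\Delta_{Q_2})^\circ(q) + \Sigma_Q^\circ(q) = ((\Delta_{Q_1}\oplus\Delta_{Q_2})\cap\Sigma_Q)^\circ(q)$, where the last equality is the standard identity $(\Delta\cap\Sigma)^\circ = \Delta^\circ + \Sigma^\circ$. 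Translating back through the coordinate form of $\Omega^\flat$, namely $\Omega^\flat(v) = (-v_p, v_q)$, and the coordinate description \eqref{induced_Dirac_def}--\eqref{DiracStructCoord} of the annihilator $\Delta_{T^*Q}^\circ$, the condition ``$\alpha_q + v_p \in ((\Delta_{Q_1}\oplus\Delta_{Q_2})\cap\Sigma_Q)^\circ$ and $\alpha_p = v_q$'' is exactly $\alpha - \Omega^\flat(q,p)\cdot v \in \Delta_{T^*Q}^\circ(q,p)$. This shows the two sets coincide fiberwise, which is the claim.

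The main obstacle I expect is the annihilator-sum step, that is, justifying $(\Delta^\circ + \Sigma^\circ) = (\Delta\cap\Sigma)^\circ$ and, correspondingly, that a suitable $\beta_q$ exists. This identity holds pointwise for subspaces, but to produce a \emph{Dirac structure} (a smooth subbundle of constant fiber dimension) one needs the intersection $(\Delta_{Q_1}\oplus\Delta_{Q_2})\cap\Sigma_Q$ to have locally constant rank, which is precisely the clean-intersection hypothesis; this regularity is what guarantees $\Delta_{T^*Q}$ and $\Delta_{T^*Q}^\circ$ are well-defined distributions and that the fiberwise construction \eqref{induced_Dirac_def} yields a genuine induced Dirac structure, so the word ``locally'' in the statement is essential. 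Everything else is bookkeeping with the coordinate formulas already recorded in the excerpt.
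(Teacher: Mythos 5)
This proposition is quoted from \citet{JaYo2014} and the paper gives no proof of it, so there is nothing to compare against directly; the closest internal analogue is the paper's proof of the discrete version, $(D_{\Delta_{Q_1}}^{d+}\oplus D_{\Delta_{Q_2}}^{d+})\boxtimes_d D_{\text{int}}^{d+} = D_{\Delta_Q}^{d+}$. Your argument is correct and follows essentially the same route as that discrete proof: split membership in the tensor product into the distribution condition and the covector condition, use $(v,-\beta)\in D_{\text{int}}$ to force $\beta_p=0$ and $\beta_q\in\Sigma_Q^\circ(q)$, and then eliminate $\beta_q$ to land on the annihilator of the intersected distribution. One small point in your favor: you correctly invoke the subspace identity $(\Delta\cap\Sigma)^\circ=\Delta^\circ+\Sigma^\circ$ and tie the clean-intersection hypothesis to the constancy of rank needed for this to define a genuine subbundle, whereas the paper's discrete proof writes the corresponding step with a union symbol ($\Delta^\circ\cup\Sigma_Q^\circ$) where a sum of subspaces is meant; your version is the more careful statement of the same computation.
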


Note that for $Q = Q_1\times Q_2$, the canonical symplectic form $\Omega_{T^*Q} = \Omega_{T^*Q_1}\oplus\Omega_{T^*Q_2}$. Thus, if we define 
\begin{equation}
\Delta_Q = (\Delta_{Q_1}\oplus \Delta_{Q_2})\cap\Sigma_Q,
\end{equation}
the previous proposition amounts to 
\begin{equation}
(D_{\Delta_{Q_1}}\oplus D_{\Delta_{Q_2}})\boxtimes D_{\text{int}} = D_{\Delta_Q}.
\end{equation}

\subsubsection{Interconnection of Lagrange--Dirac systems}

Set $L(q,v) = L^1(q_1,v_1) + L^2(q_2,v_2)$ and $\Delta_Q = (\Delta_{Q_1}\oplus \Delta_{Q_2})\cap\Sigma_Q$. Here, as usual, $(q,v)= (q_1,q_2,v_1,v_2) \in TQ = T(Q_1\times Q_2)$ in coordinates. Then, the interconnected system satisfies
\begin{equation}
(X_D(q,v,p), \mathfrak{D}L(q,v)) \in D_{\Delta_Q}(q,p).
\end{equation}
The interconnected system also satisfies the usual Hamilton--Pontryagin principle \eqref{HPprinciple} for $L$ and $\Delta_Q$.

Should there be any external forces $F_i: TQ_i \to T^*Q_i$ acting on the subsystems, those can be lifted to $Q$ by pullback with respect to $\pi_{Q_i}:Q\rightarrow Q_i$. That is to say that $F = \sum_i \pi_{Q_i}^*F_i$ represents the external forces acting on the interconnected system. Then, the total system solves the equations
\begin{equation}
(X_D(q,v,p), \mathfrak{D}L(q,v) - F) \in D_{\Delta_Q}(q,p),
\end{equation}
and satisfies the Lagrange--d'Alembert--Pontryagin principle \eqref{LDAprinciple}.

Note that in \cite{JaYo2014}, the forces considered in the interconnection process are interaction forces between subsystems, not external forces. As demonstrated in \cite{JaYo2014}, the constraints imposed by $\Sigma_Q$ have an equivalent representation in terms of internal interaction forces. We ignore the interaction force perspective for now, viewing interconnections as governed wholly by constraints $\Sigma_Q$. We will say more about bringing the interaction force perspective into discrete interconnections in the concluding sections.

\subsection{Discrete Dirac mechanics}

In this section, we review the discrete theory of Dirac mechanics and Dirac structures developed in \citet{LeOh2011}. We begin with a Lagrangian function $L:TQ\rightarrow\mathbb{R}$ and a continuous constraint distribution $\Delta_Q \subset TQ$.

\subsubsection{A discrete Tulczyjew triple}
Recall the continuous Tulczyjew triple, summarized in the following diagram.
\begin{equation}
\begin{tikzcd}
T^*TQ \arrow[bend left]{rrrr}{\gamma_Q} & & \arrow{ll}[swap]{\kappa_Q} TT^*Q \arrow{rr}{\Omega^\flat} & & T^*T^*Q .
\end{tikzcd}
\end{equation}
This is used to define the continuous Dirac differential $\mathfrak{D}L = (\gamma_Q \circ \textbf{d})L$.

In \cite{LeOh2011}, the authors define a discrete Tulczyjew triple using generating functions of a symplectic map $F: T^*Q\to T^*Q$. In coordinates, these are
\begin{align}
\kappa_Q^d : ((q_0,p_0),(q_1,p_1)) \mapsto (q_0,q_1,-p_0,p_1),\\
\Omega_{d+}^\flat : ((q_0,p_0),(q_1,p_1)) \mapsto (q_0,p_1,p_0,q_1),\\
\Omega_{d-}^\flat : ((q_0,p_0),(q_1,p_1)) \mapsto (p_0,q_1,-q_0,-p_1).
\end{align}
The distinction between $\Omega_{d\pm}^\flat$ comes from choosing either the Type II or Type III generating function in its definition, or equivalently, whether one chooses to endow $Q\times Q$ with a bundle structure over $Q$ by projecting onto the first or second component.

These maps define the (+) and ($-$) discrete Tulczyjew triples,
\begin{equation}
\begin{tikzcd}
T^*(Q\times Q) \arrow[bend left]{rrrr}{\gamma_Q^{d+}} & & \arrow{ll}[swap]{\kappa_Q} T^*Q\times T^*Q \arrow{rr}{\Omega_{d+}^\flat} & & T^*(Q\times Q^*),
\end{tikzcd}
\end{equation}
and
\begin{equation}
\begin{tikzcd}
T^*(Q\times Q) \arrow[bend left]{rrrr}{\gamma_Q^{d-}} & & \arrow{ll}[swap]{\kappa_Q} T^*Q\times T^*Q \arrow{rr}{\Omega_{d-}^\flat} & & T^*(Q^*\times Q).
\end{tikzcd}
\end{equation}
We use $\gamma_Q^{d\pm}$ to define a ($\pm$) discrete Dirac differential on $L_d$ and $\Omega_{d\pm}^\flat$ to define $(\pm)$ discrete induced Dirac structures.

\subsubsection{Discrete constraint distributions and discrete induced Dirac structures}
Recall that a continuous Lagrange--Dirac system on a manifold $Q$ has an associated constraint distribution $\Delta_Q \subset TQ$. With this we have a set of associated constraint one-forms $\{\omega^a\}$ such that
\begin{equation}
\Delta_Q^\circ(q) = \Span\{\omega^a(q)\}_{a=1}^m, \ \ \text{i.e.,} \ \ \Delta_Q = \cap_{a=1}^m \ker(\omega^a(q)).
\end{equation}
We define a discrete constraint distribution by discretizing these constraint one-forms. In the approach developed in \cite{LeOh2011}, we do this by using a retraction $R : TQ\to Q$, which is defined below.
\begin{definition}[{\citet[][Definition~4.1.1 on p.~55]{AbMaSe2008}}]
  A retraction on a manifold $Q$ is a smooth mapping $R: TQ \to Q$ with the following properties:
  Let $R_{q}: T_{q}Q \to Q$ be the restriction of $R$ to $T_{q}Q$ for an arbitrary $q \in Q$; then,
  \begin{enumerate}[(i)]
  \item $R_{q}(0_{q}) = q$, where $0_{q}$ denotes the zero element of $T_{q}Q$;
    \label{item:Retraction-i}
  \item with the identification $T_{0_{q}}T_{q}Q \simeq T_{q}Q$, $R_{q}$ satisfies
    \begin{equation*}
      \label{eq:TR}
      T_{0_{q}} R_{q} = \id_{T_{q}Q},
    \end{equation*}
    where $T_{0_{q}} R_{q}$ is the tangent map of $R_{q}$ at $0_{q} \in T_{q}Q$.
    \label{item:Retraction-ii}
  \end{enumerate}
\end{definition}
As with the Tulczyjew triple, we have a (+) and a ($-$) way of doing this, resulting in discrete forms $\omega_{d\pm}^a : Q\times Q \to \mathbb{R}$.
\begin{equation}
\omega_{d+}^a(q_0,q_1) = \omega^a(q_0)\left(R_{q_0}^{-1}(q_1)\right), \ \ \ \omega_{d-}^a(q_0,q_1) = \omega^a(q_1)\left(-R_{q_1}^{-1}(q_0)\right).
\end{equation}
The discrete constraint distribution is then defined as
\begin{equation}
\Delta_Q^{d\pm} = \{(q_0,q_1) \in Q\times Q \ | \ \omega_{d\pm}(q_0,q_1) = 0, \ a = 1,\dots,m\}.
\end{equation}
In the classical theory of variational integrators, the pair $(q_0,q_1)$ is thought of as the discrete analogue to a tangent vector in $TQ$. The $(\pm)$ formulation here can be thought of as a right and left formulation based on treating one of $q_0, q_1$ as the basepoint and the other as a representative of the velocity. Indeed, as noted in \cite{LeOh2011}, the distribution $\Delta_Q^{d+}$ constrains only $q_1$, while $\Delta_Q^{d-}$ constrains only $q_0$. This is consistent with what one would expect with nonholonomic constraints, where the velocities are constrained locally, but the positions are unconstrained.

Recall that a continuous Dirac structure on $T^*Q$ relies on the distribution $\Delta_{T^*Q} = (T\pi_Q)^{-1}(\Delta_Q) \subset TT^*Q$ for the canonical projection $\pi_Q:T^* Q\rightarrow Q$. At the discrete level, we define
\begin{equation}
\begin{aligned}
\Delta_{T^*Q}^{d+} &= (\pi_Q\times\pi_Q)^{-1}(\Delta_Q^{d\pm})\\
&= \left\{\left((q_0,p_0),(q_1,p_1)\right)\in T^*Q\times T^*Q \ | \ (q_0,q_1) \in \Delta_Q^{d\pm}\right\},
\end{aligned}
\end{equation}
and
\begin{equation}
\begin{aligned}
\Delta_{Q\times Q^*}^\circ &= \left\{(q,p,\alpha_q,0)\in T^*(Q\times Q^*) \ | \ \alpha_q \text{d}q\in \Delta_Q^\circ(q)\right\},\\
\Delta_{Q^*\times Q}^\circ &= \left\{(q,p,0,\alpha_q)\in T^*(Q^*\times Q) \ | \ \alpha_q \text{d}q\in \Delta_Q^\circ(q)\right\}.
\end{aligned}
\end{equation}
The distributions $\Delta_{T^*Q}^{d\pm}$ serve as the discrete analogues of $\Delta_{T^*Q}$, while $\Delta_{Q\times Q^*}^\circ$ and $\Delta_{Q^*\times Q}^\circ$ are the (+) and ($-$) discrete analogues of $\Delta_{T^*Q}^\circ$, respectively.

We then define discrete induced Dirac structures using these discrete distributions and the discrete maps $\Omega_{d\pm}^\flat$ defined earlier. We have
\begin{equation}
\begin{aligned}
D_{\Delta_Q}^{d+} &= \{\left((z,z^+),\alpha_{\hat{z}}\right) \in (T^*Q\times T^*Q)\times T^*(Q\times Q^*) \ | \\
&\qquad (z,z^+)\in \Delta_{T^*Q}^{d+}, \ \alpha_{\hat{z}} - \Omega_{d+}^\flat(z,z^+)\in \Delta_{Q\times Q^*}\}
\end{aligned}
\end{equation}
and
\begin{equation}
\begin{aligned}
D_{\Delta_Q}^{d-} &= \{\left((z^-,z),\alpha_{\tilde{z}}\right) \in (T^*Q\times T^*Q)\times T^*(Q^*\times Q) \ | \\
&\qquad (z^-,z)\in \Delta_{T^*Q}^{d-}, \ \alpha_{\tilde{z}} - \Omega_{d-}^\flat(z^-,z)\in \Delta_{Q^*\times Q}\}.
\end{aligned}
\end{equation}
Given $z = (q,p)$ and $z^+ = (q^+,p^+)$, then $\hat{z} = (q,p^+)$. Given $z^- = (q^-,p^-)$ and $z = (q,p)$, then $\tilde{z} = (p^-,q)$.

\subsubsection{The discrete Dirac differential and discrete Dirac mechanics}
We have two versions of the discrete Dirac differential,
\begin{equation}
\mathfrak{D}^+L_d = \gamma_Q^{d+}\circ \textbf{d}L_d \ \ \  \text{and} \ \ \ \mathfrak{D}^-L_d = \gamma_Q^{d-}\circ \textbf{d}L_d.
\end{equation}
Using the discrete vector field
\begin{equation}
X_d^k = \left((q_k,p_k),(q_{k+1},p_{k+1})\right) \in T^*Q\times T^*Q,
\end{equation}
we  have the following systems. A \emph{(+) discrete Lagrange--Dirac system} satisfies
\begin{equation}
(X_d^k, \mathfrak{D}^+L_d(q_k,q_k^+))\in D_{\Delta_Q}^{d+}.
\label{pDiscreteDiracStructFormulation}
\end{equation}
A \emph{($-$) discrete Lagrange--Dirac system} satisfies
\begin{equation}
(X_d^k,\mathfrak{D}^-L_d(q_{k+1}^-,q_{k+1}))\in D_{\Delta_Q}^{d-}.
\label{mDiscreteDiracStructFormulation}
\end{equation}
The variables $q_k^+$ and $q_{k+1}^-$ are the discrete analogues of the velocity variable. In coordinates, equation \eqref{pDiscreteDiracStructFormulation} produces the \emph{(+) discrete Lagrange--Dirac equations of motion},
\begin{subequations}
\begin{align}
0 &= \omega_{d+}^a(q_k, q_{k+1}) \quad a=1,\dots,m, \\
q_{k+1} &= q_k^+, \\
p_{k+1} &= D_2L_d(q_k,q_k^+), \\ 
p_k  &= - D_1L_d(q_k,q_k^+) + \mu_a\omega^a(q_k),
\end{align}\label{pDiracEqns}\end{subequations}
where $\mu_a$ are Lagrange multipliers, and the last equation uses the Einstein summation convention. Equation \eqref{mDiscreteDiracStructFormulation} produces the \emph{($-$) discrete Lagrange--Dirac equations of motion},
\begin{subequations}
\begin{align}
0&=\omega_{d-}^a(q_k, q_{k+1}) \quad a=1,\dots,m,\\ 
q_{k} &= q_{k+1}^- ,\\
p_{k} &= -D_1L_d(q_{k+1}^-,q_{k+1}), \\
p_{k+1}&= D_2L_d(q_{k+1}^-,q_{k+1}) + \mu_a\omega^a(q_{k+1}).
\end{align}\label{mDiracEqns}\end{subequations}
Again, $\mu_a$ are Lagrange multipliers, and the last equation makes use of the Einstein summation convention. Later, we will write these equations with the $q_k^+$ and $q_{k+1}^-$ variables eliminated for simplicity.

By eliminating the momentum variables, both $(\pm)$ equations simplify to the DEL equations in the unconstrained case, and they recover the nonholonomic integrators of \citet{CoMa2001}.

\subsubsection{Variational discrete Dirac mechanics}
The (+) discrete Hamilton--Pontryagin principle is
\begin{equation}
\delta\sum_{k=0}^{N-1}[L_d(q_k,q_k^+)+p_{k+1}(q_{k+1}-q_k^+)] = 0,
\label{pDiscreteHP}
\end{equation}
with variations that vanish at the endpoints, i.e. $\delta q_0 = \delta q_N = 0$, and the discrete constraints $(q_k,q_{k+1}) \in \Delta_Q^{d+}$. We also impose the constraint $\delta q_k \in \Delta_Q(q_k)$ after computing variations inside the sum. The variable $q_k^+$ serves as the discrete analog to the introduction of $v$ in the continuous principle.

The ($-$) discrete Hamilton--Pontryagin principle is
\begin{equation}
\delta\sum_{k=0}^{N-1} [L_d(q_{k+1}^-,q_{k+1}) - p_k(q_k-q_{k+1}^-)] = 0 \ .
\label{mDiscreteHP}
\end{equation}
The variable $q_k^-$ now plays the role of the discrete velocity. Again we take variations that vanish at the endpoints and impose the constraint $\delta q_k \in \Delta_Q(q_k)$. We now impose the discrete constraints $(q_k,q_{k+1})\in \Delta_Q^{d-}$.

As shown in \cite{LeOh2011}, computing variations of \eqref{pDiscreteHP} yields \eqref{pDiracEqns}, and computing variations for \eqref{mDiscreteHP} yields \eqref{mDiracEqns}. Thus, in direct analogy with the continuous case, we have equivalent variational and Dirac structure formulations of discrete Lagrange--Dirac mechanics.

\section{(+) vs. ($-$) Discrete Dirac mechanics}
Before getting to the interconnected systems results, we say a few words about the distinction between the (+) and ($-$) formulations of discrete Dirac mechanics laid out in \cite{LeOh2011}. Later sections will focus on interconnections of (+) discrete Dirac systems as that turns out to be the proper formulation for simulating forward in time.

In their full form, the (+) discrete Dirac equations are only generally solvable for forward time integration (moving forward in index), and the ($-$) discrete Dirac equations are only generally solvable for backward time integration (moving backward in index). This follows from the implicit function theorem. It also mirrors the case of the augmented approach to holonomic constraints laid out in \cite{MaWe2001}, which has a similar form. 

In momentum-matched form, the discrete Dirac equations become
\begin{align*}
D_2L_d(q_{k-1},q_k)+D_1L_d(q_k,q_{k+1})+\mu_a\omega^a(q_k) &= 0, & k&=1,\dots,N-1,\\
\omega_{d\pm}^a(q_k,q_{k+1}) &= 0, & k&=0,\dots,N-1.
\end{align*}
So the only distinction between the position trajectories of (+) and ($-$) is, potentially, in the way the constraints are discretized. The two methods generate the same trajectory when
\begin{align}
\omega_{d+}^a(q_k,q_{k+1}) = 0 &\iff \omega_{d-}^a(q_k,q_{k+1}) = 0.
\end{align}
For the retraction-based definition of $\omega_{d+}^a$ in \cite{LeOh2011}, this requires
\begin{align}
\omega^a(q_k)\cdot R_{q_k}^{-1}(q_{k+1}) = 0 &\iff \omega^a(q_{k+1})\cdot -R_{q_{k+1}}^{-1}(q_k) = 0.
\end{align}
This holds, for instance, for a force that is independent of the base point, and a retraction whose inverse is antisymmetric.
%a base point-independent force and an antisymmetric inverse retraction. 
For example, an equality constraint between two redundant variables will be independent of the base point, and the vector space retraction $R_q(v) = q+hv$ has an inverse that is antisymmetric in $(q_k, q_{k+1})$.
\begin{equation}
R_{q_k}^{-1}(q_{k+1}) = (q_{k+1}-q_k)/h = -R_{q_{k+1}}^{-1}(q_k).
\end{equation}
If we consider more general discretizations for $\omega_{d\pm}^a$, we could purposefully choose symmetric discretizations so that the (+) and ($-$) formulations generate the same position trajectories.

\section{Discrete Dirac interconnections}

In this section we present results for interconnecting a finite number of systems on $Q_1,\dots,Q_n$ to form a system on $Q = Q_1\times\cdots\times Q_n$.  Here and throughout the section, let $\pi_{Q_i}$ denote the projection from $Q$ onto $Q_i$ and $T\pi_{Q_i}$ denote the tangent lift of $\pi_{Q_i}$. In coordinates, we have $q = (q_1,\dots,q_n), v_q = (q_1,\dots,q_n,v_{q_1},\dots,v_{q_n})$ with $\pi_{Q_i}(q) = q_i$ and $T\pi_{Q_i}(v_q) = (q_i,v_{q_i})$. At the continuous level, we have two equivalent views of Dirac interconnections: through variational principles and through Dirac structures \cite{JaYo2014}. We always have an interconnection distribution $\Sigma_Q \subset TQ$ describing the interaction between the two systems. We can think of the interconnected system as the system generated variationally by $L(q,\dot{q}) = L^1(T\pi_{Q_1}(q,\dot{q})) + L^2(T\pi_{Q_2}(q,\dot{q}))$ and $\Delta_Q = (\Delta_{Q_1}\oplus \Delta_{Q_2})\cap\Sigma_Q$. To view interconnection in terms of Dirac structures, we write $(X_D, d_DL(q,v)) \in (D_{\Delta_{Q_1}}\oplus D_{\Delta_{Q_2}})\boxtimes D_{\text{int}}$ for the same Lagrangian. Here $D_{\text{int}}$ is a Dirac structure on $T^*Q$ derived from $\Sigma_Q$ and $\boxtimes$ is the Dirac tensor product defined earlier.

These two views of interconnection are completely equivalent, so that, in particular, $(D_{\Delta_{Q_1}}\oplus D_{\Delta_{Q_2}})\boxtimes D_{\text{int}} = D_{\Delta_Q}$ for $\Delta_Q = (\Delta_{Q_1}\oplus \Delta_{Q_2})\cap\Sigma_Q$. We mimic each viewpoint at the discrete level, producing an analogous equivalence between the two approaches.

\subsection{Interconnecting two discrete Dirac systems variationally through $\Sigma_Q$}
Suppose we have two systems $(L^1, \Delta_{Q_1})$ and $(L^2, \Delta_{Q_2})$ with configuration manifolds $Q_1$ and $Q_2$. Suppose we also have a distribution $\Sigma_Q \subset TQ$ for $Q = Q_1\times Q_2$ describing the interconnection of systems 1 and 2. Then, from \cite{JaYo2014}, we know that the interconnected system is again a Dirac system with Lagrangian $L(q,v) = L^1(q_1,v_1)+L^2(q_2,v_2)$ and distribution $\Delta_Q = (\Delta_{Q_1}\oplus\Delta_{Q_2})\cap\Sigma_Q$. To discretize any of these systems in the way laid out in \cite{LeOh2011}, we must choose a discretization scheme $L\mapsto L_d$ and a retraction $R: TQ\to Q$. We will assume our discretization scheme is linear in $L$, i.e. for $L = L^1(T\pi_{Q_1}(q,v))+L^2(T\pi_{Q_2}(q,v))$ we get $L_d(q_k,q_{k+1}) = L_d^1(q_k^1,q_{k+1}^1)+L_d^2(q_k^2,q_{k+1}^2)$. This is a relatively weak assumption. Schemes for constructing $L_d$ are based on approximating the exact discrete Lagrangian given by
\begin{equation}
L_d^E(q_0,q_1;h) = \int_0^h L(q(t),\dot{q}(t))dt,
\end{equation}
where $q(t)$ satisfies the appropriate differential equations (Euler-Lagrange, forced Euler-Lagrange, Dirac, etc.) and the boundary conditions $q(0)=q_0$, $q(h)=q_1$. Any forces or constraints are discretized separately, though there is an argument to be made in favor of using the same discretization scheme for each \cite{helen_thesis}. Since the exact discrete Lagrangian is linear in $L$, discretizations are most often linear as well. For instance, discretization based on applying quadrature to the integral in $L_d^E$ will satisfy linearity in $L$.

\subsubsection{Compatible constraint discretizations}
The relevant constraint distributions in interconnection are $\Delta_{Q_1},\dots,\Delta_{Q_n}$, $\Sigma_{Q}$ and $\Delta_Q = (\Delta_{Q_1} \oplus \cdots \oplus \Delta_{Q_n})\cap\Sigma_{Q}$. To get equivalence between interconnecting systems before and after discretization, we need to make a particular choice of basis for $\Delta_Q^\circ$ and assume a \emph{compatible} constraint discretization, defined below. We will address the sufficient conditions on $\Delta_{Q_1},\dots,\Delta_{Q_n}$, and $\Sigma_{Q}$ to ensure that the resulting discrete equations of motion have an admissible solution in future work. But, at the minimum, this will depend on the extent to which the individual nonholonomic constraint distributions $\Delta_{Q_i}$ are compatible with the interconnection constraint $\Sigma_{Q}$ projected onto the corresponding $Q_i$. 

From $\Delta_Q = (\Delta_{Q_1} \oplus \cdots \oplus \Delta_{Q_n})\cap\Sigma_{Q}$, we have $\Delta_Q^\circ = (\Delta_{Q_1} \oplus \cdots \oplus \Delta_{Q_n})^\circ\cup\Sigma_{Q}^\circ$. Thus, we can construct a basis for $\Delta_Q^\circ$ from the bases of $\Delta_{Q_i}^\circ$ and $\Sigma_{Q}^\circ$. Let $\{\omega_i^a(q^i)\}_{a=1}^{m_i}$ denote a basis for $\Delta_{Q_i}^\circ(q^i)$. We construct a basis for $(\Delta_{Q_1}\oplus\cdots\oplus\Delta_{Q_n})^\circ(q)$ from the individual bases $\{\omega_i^a(q^i)\}_{a=1}^{m_i}$. Let $\pi_{Q_i}(q)\in Q_i$ denote the $i^{th}$ component projection of $q\in Q$. Define $\tilde{\omega}_{i}^a(q)$ by $\tilde{\omega}_i^a(q)\cdot v_{q} = \omega_i^a(\pi_{Q_i}(q))\cdot T\pi_{Q_i}(v_{q}).$ Then, $\{\{\tilde{\omega}_{i}^a(q)\}_{a=1}^{m_i}\}_{i=1}^n$ forms a basis for $(\Delta_{Q_1}\oplus\cdots\oplus\Delta_{Q_n})^\circ(q)$. Select a basis for $\Sigma_Q^\circ(q) = \text{span}\{\alpha^b(q)\}_{b=1}^l$. Then, $\Delta_Q^\circ(q) = \text{span}\{\tilde{\omega}_i^a(q), \alpha^b(q)\}$, with the appropriate ranging of indices. This will always be our chosen basis for $\Delta_Q^\circ$. 

We will call a constraint discretization \emph{compatible} if
\begin{equation}
\tilde{\omega}_{d+,i}^a(q_k,q_{k+1}) = \omega_{d+,i}^a(q_k^i,q_{k+1}^i).
\end{equation}
We use the notation $q_k^i$ to mean the $i^{th}$ component at the $k^{th}$ time-step. So the full coordinate expression at $t_k$ is $q_k = (q_k^1,\dots,q_k^n)$ with $q_k^i \in Q_i$. For retraction-based discretizations, we make use of the following lemma.
\begin{lem}
For $R_1,\dots,R_n$ retractions on $Q_1,\dots,Q_n$, respectively, $R_1\times\cdots\times R_n$ is a retraction on $Q = Q_1\times\cdots\times Q_n$.
\end{lem}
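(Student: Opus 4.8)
The plan is to verify the two defining properties of a retraction (from the cited Definition) directly on the product map $R := R_1 \times \cdots \times R_n$, using the product structure of the tangent bundle $T(Q_1 \times \cdots \times Q_n) \cong TQ_1 \times \cdots \times TQ_n$. The whole argument is essentially a coordinate-free restatement of the fact that ``retraction'' is a property checked fiberwise at the zero section, and both required conditions are componentwise.

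First I would fix the identification $T(Q_1\times\cdots\times Q_n)\cong TQ_1\times\cdots\times TQ_n$, under which a tangent vector at $q=(q_1,\dots,q_n)$ is written $v_q=(v_{q_1},\dots,v_{q_n})$ with $v_{q_i}\in T_{q_i}Q_i$, and the zero vector $0_q$ corresponds to $(0_{q_1},\dots,0_{q_n})$. With this, $R_q(v_q)=(R_{1,q_1}(v_{q_1}),\dots,R_{n,q_n}(v_{q_n}))$, and smoothness of $R$ follows immediately from smoothness of each $R_i$ together with smoothness of the product identification. For property~(i), I would evaluate $R_q(0_q)=(R_{1,q_1}(0_{q_1}),\dots,R_{n,q_n}(0_{q_n}))$ and apply property~(i) of each $R_i$ componentwise to get $(q_1,\dots,q_n)=q$.

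For property~(ii), the key observation is that the tangent map of a product map is the product of the tangent maps, and that the identification $T_{0_q}T_qQ\simeq T_qQ$ is compatible with the product decomposition, i.e. it factors as the product of the identifications $T_{0_{q_i}}T_{q_i}Q_i\simeq T_{q_i}Q_i$. Granting this, I would write
\begin{equation*}
T_{0_q}R_q = T_{0_{q_1}}R_{1,q_1}\times\cdots\times T_{0_{q_n}}R_{n,q_n} = \id_{T_{q_1}Q_1}\times\cdots\times\id_{T_{q_n}Q_n} = \id_{T_qQ},
\end{equation*}
where the middle equality is property~(ii) applied to each $R_i$.

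The main obstacle, and the only step requiring genuine care rather than bookkeeping, is verifying that the canonical identification $T_{0_q}T_qQ\simeq T_qQ$ respects the product structure so that the tangent map $T_{0_q}R_q$ really does decompose as the product of the $T_{0_{q_i}}R_{i,q_i}$ under that identification. This is a statement about the naturality of the double-tangent identification with respect to products, which is true but deserves an explicit (if brief) check; once it is in place the rest is a componentwise application of the retraction axioms. I expect this to be straightforward but it is where an overly terse proof could hide a gap.
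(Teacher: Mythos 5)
Your proposal is correct: the paper states this lemma without proof, and your componentwise verification of the two retraction axioms under the identification $T(Q_1\times\cdots\times Q_n)\cong TQ_1\times\cdots\times TQ_n$ is precisely the routine argument that omission presupposes. Your flagged subtlety --- that the canonical identification $T_{0_q}T_qQ\simeq T_qQ$ factors through the product decomposition, so that $T_{0_q}R_q$ genuinely splits as the product of the $T_{0_{q_i}}R_{i,q_i}$ --- is real and correctly resolved, since $T_qQ$ is a product of vector spaces and the zero-section identification is natural with respect to products.
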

Compatibility of retraction-based discretizations requires the use of $R_1\times\cdots\times R_n$ as the retraction on $Q = Q_1\times\cdots\times Q_n$.

\subsubsection{Discrete interconnections using compatible constraint discretizations}

Discretizing individual systems before interconnection yields
\begin{subequations}
\begin{align}
p_{k+1}^i &= D_2L_d^i(q_k^i,q_{k+1}^i),\\
p_k^i &= -D_1L_d^i(q_k^i, q_{k+1}^i) + \mu_a\omega_{i}^a(q_k^i),\\
0&=\omega_{d+,i}^a(q_k^i,q_{k+1}^i), & a&=1,\dots,m_i.
\end{align}
\end{subequations}
Here, $L_d^i$ have been discretized according to some scheme linear in $L$, and 
\begin{equation}
\omega_{d+,i}^a(q_k^i,q_{k+1}^i) = \omega_i^a(q_k^i)(R_{i,q_k^i}^{-1}(q_{k+1}^i)).
\end{equation}

As above, take $\Sigma_Q^\circ(q) = \text{span}\{\alpha^b(q)\}_{b=1}^l$, so each $\alpha^b(q) \in T_q^*Q$. Define $\alpha_i^b(q)\in T^*Q_i$ by $\alpha_i^b(q)\cdot v_{q_i} = \alpha^b(q)\cdot v_{q_i}^h$ for $v_{q_i}^h$ the horizontal lift of $v_{q_i}$. To interconnect the discrete systems above, we need to append an $\alpha_i^b(q)$ term, which represents an unknown force of constraint, to each equation for $p_k^i$ and impose the $\alpha_{d+}^b$ constraints to $(q_k,q_{k+1})\in Q$. That is, the interconnected system is
\begin{subequations}
\begin{align}
p_{k+1}^i &= D_2L_d^i(q_k^i,q_{k+1}^i), \label{int_eqns_1}\\
p_k^i &= -D_1L_d^i(q_k^i, q_{k+1}^i) + \mu_a\omega_{i}^a(q_k^i)+\lambda_b\alpha_i^b(q_k), \label{int_eqns_2}\\
0&=\omega_{d+,i}^a(q_k^i,q_{k+1}^i), & a&=1,\dots,m_i, \label{int_eqns_3} \\
0&=\alpha_{d+}^b(q_k,q_{k+1}), & b &= 1,\dots,l. \label{int_eqns_end}
\end{align}
\end{subequations}
Note that all of the $\alpha$ terms depend on the entire coordinate $q_k = (q_k^1,\dots,q_k^n)$, not just on the $i^{th}$ component $q_k^i$.

\begin{thm}
Assume $\phi: L \to L_d$ is linear and the constraint discretization is compatible. Then, the discretely interconnected equations \eqref{int_eqns_1}--\eqref{int_eqns_end} are equivalent to the (+) discrete Dirac equations for $(L,\Delta_Q) = (L^1+\cdots+L^n, (\Delta_{Q_1}\oplus\cdots\oplus\Delta_{Q_n})\cap\Sigma_Q)$.
\end{thm}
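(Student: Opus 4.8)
The plan is to write out the (+) discrete Dirac equations \eqref{pDiracEqns} for the composite system $(L,\Delta_Q)$ explicitly, using our designated basis $\{\tilde{\omega}_i^a(q),\alpha^b(q)\}$ for $\Delta_Q^\circ$, and then to verify that each resulting equation splits, block by block, into exactly one of \eqref{int_eqns_1}--\eqref{int_eqns_end}. Because the Lagrange multipliers appearing in both formulations are the same unknowns, this term-by-term identification establishes the equivalence in both directions at once. First I would substitute $q_k^+ = q_{k+1}$ to eliminate the discrete velocity, so that the composite equations read $p_{k+1} = D_2 L_d(q_k,q_{k+1})$ and $p_k = -D_1 L_d(q_k,q_{k+1}) + \mu_{i,a}\tilde{\omega}_i^a(q_k) + \lambda_b\alpha^b(q_k)$, together with the constraints $\tilde{\omega}_{d+,i}^a(q_k,q_{k+1})=0$ and $\alpha_{d+}^b(q_k,q_{k+1})=0$, where I have split the multiplier family $\mu_c\omega^c$ into the two sub-families indexed by the chosen basis.

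The Lagrangian terms split by linearity. Because $\phi$ is linear, $L_d(q_k,q_{k+1}) = \sum_i L_d^i(q_k^i,q_{k+1}^i)$, and each summand depends only on its own block $(q_k^i,q_{k+1}^i)$. Hence $D_2 L_d$ and $D_1 L_d$ are block-diagonal: the $i^{th}$ block of $D_2 L_d(q_k,q_{k+1})$ is $D_2 L_d^i(q_k^i,q_{k+1}^i)$, and likewise for $D_1$. Reading off the $i^{th}$ block of $p_{k+1} = D_2 L_d$ gives \eqref{int_eqns_1}, and the $D_1$ contribution to the $i^{th}$ block of the $p_k$ equation is $-D_1 L_d^i(q_k^i,q_{k+1}^i)$, matching the first term of \eqref{int_eqns_2}.

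It remains to split the multiplier and constraint terms, which is where the chosen basis and the compatibility hypothesis do the work. By its pullback definition, $\tilde{\omega}_i^a(q)$ is supported only on the $i^{th}$ block, where it equals $\omega_i^a(q^i)$; consequently the term $\mu_{i,a}\tilde{\omega}_i^a(q_k)$ contributes $\mu_{i,a}\omega_i^a(q_k^i)$ to the $i^{th}$ block of the $p_k$ equation and nothing to the remaining blocks. The term $\lambda_b\alpha^b(q_k)$ contributes its $i^{th}$ block, which is precisely $\lambda_b\alpha_i^b(q_k)$ by the definition of $\alpha_i^b$ via the horizontal lift. Together these reproduce \eqref{int_eqns_2} exactly. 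For the constraints, compatibility gives $\tilde{\omega}_{d+,i}^a(q_k,q_{k+1}) = \omega_{d+,i}^a(q_k^i,q_{k+1}^i)$, turning the composite constraints $\tilde{\omega}_{d+,i}^a=0$ into \eqref{int_eqns_3}, while the constraints $\alpha_{d+}^b(q_k,q_{k+1})=0$ are \eqref{int_eqns_end} verbatim.

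The only genuine obstacle is organizational rather than computational: one must confirm that the designated basis $\{\tilde{\omega}_i^a,\alpha^b\}$ is exactly what causes the composite multiplier term to separate into a purely intrinsic piece (living on block $i$ alone) and an interaction piece (the $\alpha^b$), and that compatibility is precisely what prevents the discretized constraints from mixing blocks. A different basis for $\Delta_Q^\circ$, or an incompatible discretization, would entangle the blocks and destroy the clean correspondence; under the stated hypotheses, however, the two systems are literally the same equations rewritten, with the multipliers $\mu_{i,a}$ and $\lambda_b$ in direct correspondence, so the equivalence holds in both directions.
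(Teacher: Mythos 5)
Your proposal is correct and follows essentially the same route as the paper's proof: start from the monolithic (+) discrete Dirac equations, use linearity of $\phi$ to split $D_1L_d$ and $D_2L_d$ block-wise, choose the designated basis $\{\tilde{\omega}_i^a,\alpha^b\}$ for $\Delta_Q^\circ$ to separate the multiplier term, and invoke compatibility to decouple the discrete constraints. Your write-up is somewhat more explicit than the paper's about the block support of the one-forms and the two-way identification of multipliers, but the underlying argument is identical.
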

\begin{proof}
We just have to consider the discretization of $(L, \Delta_Q)$ component-wise. The discretization of the monolithic system using $\phi$ and $R$ yields the usual (+) discrete Dirac equations,
\begin{subequations}
\begin{align}
p_{k+1} &= D_2 L_d(q_k, q_{k+1}),\\
p_k &= -D_1L_d(q_k, q_{k+1}) + \eta_c\beta^c(q_k), \label{pk_int_proof}\\
0&=\beta_{d+}^c(q_k,q_{k+1}) = 0, \quad c = 1, \dots, m. \label{int_proof_constraints}
\end{align}
\end{subequations}
Here $m$ is the dimension of $\Delta_Q^\circ$. From our assumptions on the linearity of $\phi$, the first equation decomposes component-wise to give equation \eqref{int_eqns_1}.

In this notation, $\{\beta^c(q_k)\}_c$ is a basis for $\Delta_Q^\circ(q_k) = ((\Delta_{Q_1}\oplus\cdots\oplus\Delta_{Q_n})\cap\Sigma_Q)^\circ$. As we will see, choosing an appropriate basis leads to equations \eqref{int_eqns_2}--\eqref{int_eqns_end}. As above, define $\beta_i^c(q)\in T^*Q_i$ by $\beta_i^c(q)\cdot v_{q_i} = \beta^c(q)\cdot v_{q_i}^h$ for $v_{q_i}^h$ the horizontal lift of $v_{q_i}$. Then, equation \eqref{pk_int_proof} decomposes into
\begin{equation}
p_k^i = -D_1L_d^i(q_k^i,q_{k+1}^i)+\eta_c\beta_i^c(q_k). \label{pk_int_proof_split}
\end{equation}

Taking the basis defined above, we have $\{\beta^c\} = \{\omega_i^a(q), \alpha^b(q)\}$, so  equation \eqref{pk_int_proof_split} becomes equation \eqref{int_eqns_2}. Assume we construct $\beta_{d+}^c$ via a compatible discretization. Then, equation \eqref{int_proof_constraints} accounts for equations \eqref{int_eqns_3} and \eqref{int_eqns_end}.
\end{proof}

Thus, given a finite number of Lagrange--Dirac systems $(L^i,\Delta_{Q_i})$ together with the interconnection constraint $\Sigma_Q$, we have shown how to interconnect the discrete systems generated by $(L_d^i,\Delta_{Q_i}^{d+},\Delta_{Q_i}^\circ)$ through $\Sigma_Q^{d+}$ and $\Sigma_Q^\circ$ to obtain the discretization of the fully interconnected system.

\subsection{Discrete interconnections as a product on discrete Dirac structures}

To mimic the continuous case, we would like to say that this discrete interconnection process corresponds to a discrete Dirac tensor product on discrete Dirac structures. That is, we would like for the discretization of the interconnected system, which can be expressed as $(X_d^k,\mathfrak{D}^+L_d(q_k,q_k^+))\in D_{\Delta_Q}^{d+}$, to be equivalently expressed as  $(X_d^k,\mathfrak{D}^+L_d(q_k,q_k^+))\in (D_{\Delta_{Q_1}}^{d+} \oplus D_{\Delta_{Q_2}}^{d+})\boxtimes_d D_{\text{int}}^{d+}$ for $D_{\text{int}}^{d+}$ defined from $\Sigma_Q$, some definition of $\boxtimes_d$, and the appropriate notion of $\oplus$.

\subsubsection{The direct sum of induced discrete Dirac structures}

The definition of $\oplus$ for induced discrete Dirac structures is relatively obvious. We make it precise in this section to ensure that the convenient properties of using $\oplus$ on induced Dirac structures carry over to the discrete setting. Suppose, again, that $Q = Q_1\times Q_2$ and that we have two constraint distributions $\Delta_{Q_1}\subset TQ_1$ and $\Delta_{Q_2}\subset TQ_2$. We can derive each distribution from its annihilator as $\Delta_{Q_i}(q_i) = \cap_a \ker(\omega_i^a(q_i))$ for $\{\omega_i^a(q_i)\}_a$ a basis for $\Delta_{Q_i}^\circ(q_i)$. The direct sum distribution on $Q$ has annihilator given by $(\Delta_{Q_1}\oplus \Delta_{Q_2})^\circ = \Delta_{Q_1}^\circ \oplus \Delta_{Q_2}^\circ$, so we can construct a basis for it by extending the bases of $\Delta_{Q_i}^\circ$. As in the last section, we use $\pi_{Q_i}: Q \to Q_i$ to denote component projections from $Q$. To extend $\omega_i^a$, we denote by $\tilde{\omega}_i^a$ the one-form on $Q$ such that $\tilde{\omega}_i^a(q)\cdot v_q = \omega_i^a(\pi_{Q_i}(q))\cdot T\pi_{Q_i}(v_q)$. In coordinates $\tilde{\omega}_1^a = (\omega_1^a, 0)$ and $\tilde{\omega}_2^b = (0,\omega_2^b)$. Then, the distribution $\Delta_{Q_1}\oplus\Delta_{Q_2}$ has a local expression as $(\Delta_{Q_1}\oplus \Delta_{Q_2})(q) = [\cap_a \ker(\tilde{\omega}_1^a(q))]\cap[\cap_b \ker(\tilde{\omega}_2^b(q))]$.

The direct sum of continuous Dirac structures $D_{\Delta_{Q_1}}$ and $D_{\Delta_{Q_2}}$ is given by $D_{\Delta_{Q_1}}\oplus D_{\Delta_{Q_2}} = D_{\Delta_{Q_1}\oplus\Delta_{Q_2}}$. Fiber-wise, this is given by 
\begin{equation}
(D_{\Delta_{Q_1}}\oplus D_{\Delta_{Q_2}})(q,p) = D_{\Delta_{Q_1}}(T^*i_{Q_1}(q,p))\oplus D_{\Delta_{Q_2}}(T^*i_{Q_2}(q,p)),
\end{equation}
where $i_{Q_i} : Q_i \hookrightarrow Q$ is the inclusion and $T^*i_{Q_i}$ its tangent lift. In coordinates,
 \begin{multline}
 \{ (v,\alpha) = (v_1,v_2,\alpha_1,\alpha_2) \in T_{(q_1,q_2,p_1,p_2)}T^*Q \ | \ (v_1,\alpha_1) \in D_{\Delta_{Q_1}}(q_1,p_1) \\
 \text{and} \ (v_2,\alpha_2) \in D_{\Delta_{Q_2}}(q_2,p_2)\}.
 \end{multline}
 We mimic this coordinate expression at the discrete level with the following definition.

\begin{defn}
Given two discrete induced Dirac structures $D_{\Delta_{Q_1}}^{d+}\subset (T^*Q_1\times T^*Q_1)\times T^*(Q_1\times Q_1^*)$ and $D_{\Delta_{Q_2}}^{d+} \subset (T^*Q_2\times T^*Q_2)\times T^*(Q_2\times Q_2^*)$, define their direct sum $D_{\Delta_{Q_1}}^{d+}\oplus D_{\Delta_{Q_2}}^{d+} \subset (T^*Q\times T^*Q)\times T^*(Q\times Q^*)$ coordinate-wise as
\begin{multline}
D_{\Delta_{Q_1}}^{d+}\oplus D_{\Delta_{Q_2}}^{d+} = \{((z,z+),\alpha_{\hat{z}}) \ | \ ((q_1,p_1,q_1^+,p_1^+),(q_1,p_1^+,\alpha_{q_1},\alpha_{p_1}))\in D_{\Delta_{Q_1}}^{d+} \\ 
\text{and} \ ((q_2,p_2,q_2^+,p_2^+),(q_2,p_2^+,\alpha_{q_2},\alpha_{p_2}))\in D_{\Delta_{Q_2}}^{d+} \}
\end{multline}
Here, we have partitioned the coordinates as 
\begin{equation}
(z,z^+) = (q,p,q^+,p^+) = (q_1,q_2,p_1,p_2,q_1^+,q_2^+,p_1^+,p_2^+)
\end{equation}
and
\begin{equation}
\alpha_{\hat{z}} = (q,p^+,\alpha_q,\alpha_p) = (q_1,q_2,p_1^+,p_2^+,\alpha_{q_1},\alpha_{q_2},\alpha_{p_1},\alpha_{p_2}).
\end{equation}
\label{discrete_dirac_struct_sum_coord_def}
\end{defn}

We define the direct sum of two discrete constraint distributions as follows.

\begin{defn}
The direct sum of two discrete constraint distributions is given by
\begin{equation}
\begin{aligned}
\Delta_{Q_1}^{d+} \oplus \Delta_{Q_2}^{d+} &= \{(q_1,q_2,q_1^+,q_2^+) \in Q\times Q \  | \\ 
&\qquad (q_1,q_1^+) \in \Delta_{Q_1}^{d+}  \ \text{and} \ (q_2,q_2^+) \in \Delta_{Q_2}^{d+}\}.
\end{aligned}
\end{equation}
\end{defn}

We have the following useful lemma.

\begin{lem}
Assume we use the same separable discretization scheme to construct $\omega_{1,d+}^a$, $\omega_{2,d+}^b$, $\tilde{\omega}_{1,d+}^a,$ and $\tilde{\omega}_{2,d+}^b$. Then, $\Delta_{Q_1}^{d+} \oplus \Delta_{Q_2}^{d+} = (\Delta_{Q_1}\oplus \Delta_{Q_2})^{d+}$ and $D_{\Delta_{Q_1}}^{d+} \oplus D_{\Delta_{Q_2}}^{d+} = D_{\Delta_{Q_1}\oplus \Delta_{Q_2}}^{d+}$. Thus, $D_{\Delta_{Q_1}}^{d+}\oplus D_{\Delta_{Q_2}}^{d+}$ is again a discrete induced Dirac structure.
\end{lem}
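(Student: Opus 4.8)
The plan is to prove the two stated equalities separately and in local canonical coordinates, after which the final sentence is immediate. Throughout I would write $q=(q_1,q_2)$, $q^+=(q_1^+,q_2^+)$, $p=(p_1,p_2)$, $\alpha_q=(\alpha_{q_1},\alpha_{q_2})$, and so on, so that every object splits into a first- and second-factor block, and then simply compare the two sides block by block.

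For the distributions, I would begin by pinning down what the \emph{separable} hypothesis buys us: namely the factorization $\tilde\omega_{i,d+}^a(q,q^+)=\omega_{i,d+}^a(q_i,q_i^+)$. This is exactly the compatibility identity from the previous subsection, and with the product retraction $R_1\times R_2$ supplied by the earlier lemma it follows from a one-line computation, since $R_q^{-1}(q^+)=\bigl(R_{1,q_1}^{-1}(q_1^+),R_{2,q_2}^{-1}(q_2^+)\bigr)$ while $\tilde\omega_i^a(q)=\omega_i^a(q_i)\circ T\pi_{Q_i}$, so applying $T\pi_{Q_i}$ merely extracts the $i$th block of the retracted vector. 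Granting this, the equality $\Delta_{Q_1}^{d+}\oplus\Delta_{Q_2}^{d+}=(\Delta_{Q_1}\oplus\Delta_{Q_2})^{d+}$ is a direct unwinding of definitions: the left side is cut out by $\omega_{1,d+}^a(q_1,q_1^+)=0$ and $\omega_{2,d+}^b(q_2,q_2^+)=0$, the right side by $\tilde\omega_{1,d+}^a(q,q^+)=0$ and $\tilde\omega_{2,d+}^b(q,q^+)=0$, and the factorization identity makes these the same equations.

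For the Dirac structures, I would compare the coordinate descriptions of both sides condition by condition. Expanding $D_{\Delta_Q}^{d+}$ using $\Omega_{d+}^\flat(z,z^+)=(q,p^+,p,q^+)$ together with the form of $\Delta_{Q\times Q^*}^\circ$, membership of $\bigl((z,z^+),\alpha_{\hat z}\bigr)$ reduces to the three conditions $(q,q^+)\in\Delta_Q^{d+}$, $\alpha_p=q^+$, and $\alpha_q-p\in\Delta_Q^\circ(q)$, mirroring the continuous coordinate expression. I would apply this once with $\Delta_Q=\Delta_{Q_1}\oplus\Delta_{Q_2}$ to describe $D_{\Delta_{Q_1}\oplus\Delta_{Q_2}}^{d+}$, and separately on each factor to describe the two summands in the direct-sum definition. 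The distribution condition then matches by the first part; the momentum condition $\alpha_p=q^+$ splits verbatim into $\alpha_{p_1}=q_1^+$ and $\alpha_{p_2}=q_2^+$; and the annihilator condition matches because $(\Delta_{Q_1}\oplus\Delta_{Q_2})^\circ=\Delta_{Q_1}^\circ\oplus\Delta_{Q_2}^\circ$, so $\alpha_q-p$ lies in it precisely when each block $\alpha_{q_i}-p_i$ lies in $\Delta_{Q_i}^\circ(q_i)$. Matching all three conditions gives $D_{\Delta_{Q_1}}^{d+}\oplus D_{\Delta_{Q_2}}^{d+}=D_{\Delta_{Q_1}\oplus\Delta_{Q_2}}^{d+}$, and since the right-hand side is by construction a discrete induced Dirac structure, the final assertion follows at once.

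I expect the only real obstacle to be the first step—making the \emph{separable} hypothesis precise and confirming it yields the factorization $\tilde\omega_{i,d+}^a(q,q^+)=\omega_{i,d+}^a(q_i,q_i^+)$; everything downstream is bookkeeping. The subtlety worth attention is that the objects entering $D_{\Delta_Q}^{d+}$—the discrete distribution $\Delta_Q^{d+}$, its lift $\Delta_{T^*Q}^{d+}$, and the annihilator $\Delta_{Q\times Q^*}^\circ$—are assembled from different data (the retracted discrete one-forms versus a chosen annihilator basis), so I would take care that the same basis $\{\tilde\omega_i^a\}$ is used consistently in all of them, since otherwise the distribution condition and the annihilator condition could fail to line up factor by factor.
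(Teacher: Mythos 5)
Your proposal is correct and follows essentially the same route as the paper: establish the factorization $\tilde\omega_{i,d+}^a(q,q^+)=\omega_{i,d+}^a(q_i,q_i^+)$ from the separability hypothesis, deduce $\Delta_{Q_1}^{d+}\oplus\Delta_{Q_2}^{d+}=(\Delta_{Q_1}\oplus\Delta_{Q_2})^{d+}$ by unwinding definitions, and then match the distribution, momentum, and annihilator conditions of the two Dirac structures block by block using $(\Delta_{Q_1}\oplus\Delta_{Q_2})^\circ=\Delta_{Q_1}^\circ\oplus\Delta_{Q_2}^\circ$. The only cosmetic difference is that you phrase the second half as a single bidirectional identification of conditions, where the paper argues one direction and notes the converse is the same calculation reversed; your closing caution about using a consistent annihilator basis is also consistent with the paper's conventions.
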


\begin{proof}
We have
\begin{equation}
\begin{aligned}
(\Delta_{Q_1}\oplus \Delta_{Q_2})^{d+} &= \{(q,q^+) \in Q\times Q \ | \ \tilde{\omega}_{1,d+}^a(q,q^+) = 0 \ \text{and} \\ 
&\qquad \tilde{\omega}_{2,d+}^b(q,q^+) = 0 \ \text{for all} \ a,b\}
\end{aligned}
\end{equation}
and
\begin{align}
\Delta_{Q_i}^{d+} = \{(q_i,q_i^+) \in Q_i\times Q_i \ | \ \omega_{i,d+}^a(q_i,q_i^+) = 0 \ \text{for all} \ a\}.
\end{align}
By our assumptions, $\tilde{\omega}_{i,d+}^a(q,q^+) = \omega_{i,d+}^a(q_i,q_i^+)$. Thus, $\Delta_{Q_1}^{d+} \oplus \Delta_{Q_2}^{d+} = (\Delta_{Q_1}\oplus \Delta_{Q_2})^{d+}$.

To prove $D_{\Delta_{Q_1}}^{d+} \oplus D_{\Delta_{Q_2}}^{d+} = D_{\Delta_{Q_1}\oplus \Delta_{Q_2}}^{d+}$ we need to show that the conditions 
\begin{equation}
(q,q^+) \in (\Delta_{Q_1}\oplus \Delta_{Q_2})^{d+} \label{di_lem_dist_cond}
\end{equation}
and 
\begin{equation}
(q,p^+,\alpha_q-p,\alpha_p - q^+)\in \{(q,p,\beta,0) \ | \ \beta dq \in (\Delta_{Q_1}\oplus \Delta_{Q_2})^\circ(q)\} \label{di_lem_omega_cond}
\end{equation}
are equivalent to the conditions 
\begin{equation}
((q_1,p_1,q_1^+,p_1^+),(q_1,p_1^+,\alpha_{q_1},\alpha_{p_1}))\in D_{\Delta_{Q_1}}^{d+} \label{di_lem_separate1}
\end{equation}
and
 \begin{equation}
((q_2,p_2,q_2^+,p_2^+),(q_2,p_2^+,\alpha_{q_2},\alpha_{p_2}))\in D_{\Delta_{Q_2}}^{d+}. \label{di_lem_separate2}
\end{equation}
Using $\Delta_{Q_1}^{d+} \oplus \Delta_{Q_2}^{d+} = (\Delta_{Q_1}\oplus \Delta_{Q_2})^{d+}$, the distribution conditions implied by \eqref{di_lem_separate1} and \eqref{di_lem_separate2} are equivalent to \eqref{di_lem_dist_cond}. From \eqref{di_lem_separate1} and \eqref{di_lem_separate2} we also have
\begin{equation}
(q_1,p_1^+,\alpha_{q_1}-p_1,\alpha_{p_1} - q_1^+)\in \{(q_1,p_1,\beta,0) \ | \ \beta dq \in \Delta_{Q_1}^\circ\} 
\end{equation}
and
\begin{equation}
(q_2,p_2^+,\alpha_{q_2}-p_2,\alpha_{p_2} - q_2^+)\in \{(q_2,p_2,\beta,0) \ | \ \beta dq \in \Delta_{Q_2}^\circ\}.
\end{equation}
Thus, $(\alpha_{p_1},\alpha_{p_2}) - (q_1^+,q_2^+) = 0$. From $\alpha_{q_i} - p_i\in \Span\{\omega_i^a(q_i)\}$ we have $(\alpha_{q_1} - p_1,0) \in \Span\{\tilde{\omega}_1^a(q)\}$ and $(0, \alpha_{q_2} - p_2) \in \Span\{\tilde{\omega}_2^b(q)\}$. Thus, we have $(\alpha_{q_1} - p_1,\alpha_{q_2}-p_2) \in \Span\{\tilde{\omega}_1^a(q),\tilde{\omega}_2^b\}$, i.e. $\alpha_q - p \in (\Delta_{Q_1} \oplus \Delta_{Q_2})^\circ$. Hence, conditions  \eqref{di_lem_separate1} and \eqref{di_lem_separate2} also give \eqref{di_lem_omega_cond}.

Performing the same calculations from the reversed point of view, we can derive  \eqref{di_lem_separate1} and \eqref{di_lem_separate2} from \eqref{di_lem_dist_cond} and \eqref{di_lem_omega_cond}, proving that $D_{\Delta_{Q_1}}^{d+} \oplus D_{\Delta_{Q_2}}^{d+} = D_{\Delta_{Q_1}\oplus \Delta_{Q_2}}^{d+}$.
\end{proof}

For continuous distributions $\Delta_1$ and $\Delta_2$, a similar set of calculations show that
\begin{equation}
\Delta_1^{d+}\cap\Delta_2^{d+} = (\Delta_1\cap\Delta_2)^{d+}.
\end{equation}

\subsubsection{Defining $D_{\text{int}}^{d+}$ and $\boxtimes_d$}

We begin by defining $D_{\text{int}}^{d+}$. The distribution $\Sigma_Q$ defines the interconnection constraints on $Q = Q_1\times Q_2$. Lift $\Sigma_Q$ to $TT^*Q$, defining $\Sigma_{\text{int}} = (T\pi)^{-1}(\Sigma_Q)$. Then, the continuous interaction Dirac structure is induced by $\Sigma_{\text{int}}$ and $\Omega_{\text{int}}\equiv 0$. To discretize this construction, we define $\Omega_{\text{int}}^{d+}(q,p,q^+,p^+) = (q,p^+,0,0)$.
\begin{defn}
We defined the \emph{standard discrete interaction Dirac structure} to be
\begin{equation}
D_{\text{int}}^{d+} = \{ ((z,z^+),\alpha_{\hat{z}}) \ | \ (z,z^+) \in \Sigma_{\text{int}}^{d+}, \alpha_{\hat{z}} - \Omega_{\text{int}}^{d+}(z,z^+) \in \Sigma_{Q\times Q^*}^\circ\}(q)
\end{equation}
for $\Omega_{\text{int}}^{d+}(q,p,q^+,p^+) = (q,p^+,0,0)$.
\end{defn}
Here, as in the original definitions of the discrete Dirac structures, $z = (q,p), z^+ = (q^+,p^+), \hat{z} = (q,p^+)$. This discrete Dirac structure mirrors the induced discrete Dirac structure of \cite{LeOh2011} with $\Omega_{d\pm}^\flat$ replaced by $\Omega_{\text{int}}^{d+}$.

Recall, again, the continuous definition of $\boxtimes$.
\begin{defn}[\citet{JaYo2014}]
Let $D_a, D_b\in \text{Dir}(M)$, i.e., $D_a$ and $D_b$ are Dirac structures on $M$. We define the Dirac tensor product
\begin{equation}
\begin{aligned}
D_a\boxtimes D_b &= \{(v,\alpha)\in TM\oplus T^*M \ | \ \exists \beta\in T^*M \ \text{such that} \\ 
&\qquad (v,\alpha+\beta)\in D_a, (v,-\beta)\in D_b\}.
\end{aligned}
\end{equation}
\end{defn}
Mimicking this definition at the discrete level, we define $\boxtimes_d$ as follows.
\begin{defn}
Define the operation $\boxtimes_d$ on two discrete Dirac structures $D_1$ and $D_2$ by
\begin{equation}
\begin{aligned}
D_1\boxtimes D_2 &= \{((z,z^+),\alpha_{\hat{z}}) \ | \ \exists \beta_{\hat{z}} \in T_{\hat{z}}^*(Q\times Q^*) \\ 
&\qquad \text{with } ((z,z^+),\alpha_{\hat{z}}+\beta_{\hat{z}}) \in D_1,  ((z,z^+),-\beta_{\hat{z}})\in D_2\},
\end{aligned}
\end{equation}
where $\beta_{\hat{z}} = (q,p^+,\beta_q,\beta_{p^+}), \alpha_{\hat{z}} = (q,p^+,\alpha_q, \alpha_{p^+}), \alpha_{\hat{z}}+\beta_{\hat{z}} = (q,p^+,\alpha_q+\beta_q,\alpha_{p^+}+\beta_{p^+}), -\beta_{\hat{z}} = (q,p^+,-\beta_q, -\beta_{p^+})$.
\end{defn}

\subsubsection{Discrete interconnections via Dirac structures}

With these definitions in place, we now have the tools to state the main result.

\begin{thm}
Given two discrete Dirac structures $D_{\Delta_{Q_1}}^{d+}$ and $D_{\Delta_{Q_2}}^{d+}$ generated from $\Delta_{Q_1}\subset TQ_1$ and $\Delta_{Q_2}\subset TQ_2$ and an interconnection distribution $\Sigma_Q\subset TQ = T(Q_1\times Q_2)$,
\begin{equation}
(D_{\Delta_{Q_1}}^{d+}\oplus D_{\Delta_{Q_2}}^{d+})\boxtimes_d D_{\text{int}}^{d+} = D_{\Delta_Q}^{d+}
\end{equation}
for $\Delta_Q = (\Delta_{Q_1}\oplus \Delta_{Q_2})\cap \Sigma_Q$.
\end{thm}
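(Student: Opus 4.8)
The plan is to reduce both sides to their coordinate descriptions and match them term by term, following the structure of the continuous proposition. First I would apply the preceding lemma to rewrite $D_{\Delta_{Q_1}}^{d+}\oplus D_{\Delta_{Q_2}}^{d+}=D_{\Delta_{Q_1}\oplus\Delta_{Q_2}}^{d+}$, so that the left-hand side becomes the discrete tensor product $D_{\Delta_{Q_1}\oplus\Delta_{Q_2}}^{d+}\boxtimes_d D_{\text{int}}^{d+}$ of an induced discrete Dirac structure with the standard discrete interaction structure. Writing $\Delta=\Delta_{Q_1}\oplus\Delta_{Q_2}$ for brevity, the entire argument is then a comparison of $D_{\Delta}^{d+}\boxtimes_d D_{\text{int}}^{d+}$ with $D_{\Delta_Q}^{d+}$, where $\Delta_Q=\Delta\cap\Sigma_Q$, and I would prove the two inclusions separately.

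Next I would take a candidate element $((z,z^+),\alpha_{\hat{z}})$ with $\alpha_{\hat{z}}=(q,p^+,\alpha_q,\alpha_{p^+})$ and expand the existence of a witness $\beta_{\hat{z}}=(q,p^+,\beta_q,\beta_{p^+})$ demanded by $\boxtimes_d$. Using $\Omega_{d+}^\flat(z,z^+)=(q,p^+,p,q^+)$ and the definition of $\Delta_{Q\times Q^*}^\circ$, membership $((z,z^+),\alpha_{\hat{z}}+\beta_{\hat{z}})\in D_{\Delta}^{d+}$ unpacks into $(q,q^+)\in\Delta^{d+}$, the momentum relation $\alpha_{p^+}+\beta_{p^+}=q^+$, and $\alpha_q+\beta_q-p\in\Delta^\circ(q)$. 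Using $\Omega_{\text{int}}^{d+}(z,z^+)=(q,p^+,0,0)$ and $\Sigma_{Q\times Q^*}^\circ$, membership $((z,z^+),-\beta_{\hat{z}})\in D_{\text{int}}^{d+}$ unpacks into $(q,q^+)\in\Sigma_Q^{d+}$, $\beta_{p^+}=0$, and $\beta_q\in\Sigma_Q^\circ(q)$. The content of this step is that the interaction structure differs from the induced one precisely in replacing $\Omega_{d+}^\flat$ by $\Omega_{\text{int}}^{d+}$, and the witness $\beta_{\hat{z}}$ is exactly what absorbs this difference.

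I would then combine the two lists of conditions. Since $\beta_{p^+}=0$, the momentum relation collapses to $\alpha_{p^+}=q^+$, which is the Legendre/second-order slot of $D_{\Delta_Q}^{d+}$. The two distribution conditions give $(q,q^+)\in\Delta^{d+}\cap\Sigma_Q^{d+}=(\Delta\cap\Sigma_Q)^{d+}=\Delta_Q^{d+}$ by the discrete intersection identity recorded at the end of the previous section, valid under the compatible discretization assumed throughout. Finally, writing $\alpha_q-p=(\alpha_q+\beta_q-p)-\beta_q$ exhibits $\alpha_q-p$ as a sum of an element of $\Delta^\circ(q)$ and an element of $\Sigma_Q^\circ(q)$, so $\alpha_q-p\in\Delta^\circ(q)+\Sigma_Q^\circ(q)=(\Delta\cap\Sigma_Q)^\circ(q)=\Delta_Q^\circ(q)$ by the pointwise linear-algebra identity $(A\cap B)^\circ=A^\circ+B^\circ$. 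These three conditions are precisely membership in $D_{\Delta_Q}^{d+}$, giving one inclusion.

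For the reverse inclusion I would start from $((z,z^+),\alpha_{\hat{z}})\in D_{\Delta_Q}^{d+}$ and manufacture a witness: set $\beta_{p^+}=0$, decompose $\alpha_q-p=\eta+\sigma$ with $\eta\in\Delta^\circ(q)$ and $\sigma\in\Sigma_Q^\circ(q)$ (possible because $\Delta_Q^\circ=\Delta^\circ+\Sigma_Q^\circ$), and set $\beta_q=-\sigma$, so that $\alpha_q+\beta_q-p=\eta\in\Delta^\circ(q)$ while $\beta_q\in\Sigma_Q^\circ(q)$; the distribution conditions split back through the same intersection identity. I expect the main obstacle to be bookkeeping rather than any conceptual gap, namely keeping straight which coordinate slots are pinned to zero and verifying that the gap between $\Omega_{d+}^\flat$ and $\Omega_{\text{int}}^{d+}$ is exactly the part carried by $\beta_{\hat{z}}$. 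The only genuinely load-bearing ingredients are the already-available direct-sum lemma and discrete intersection equality, together with the annihilator-of-intersection identity, which is where the regularity of $\Delta_Q$ and the compatibility of the discretization are implicitly used.
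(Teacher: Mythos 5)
Your proposal is correct and follows essentially the same route as the paper's proof: unpack both memberships in coordinates, use the direct-sum lemma together with the discrete intersection identity $\Delta_1^{d+}\cap\Delta_2^{d+}=(\Delta_1\cap\Delta_2)^{d+}$ for the base conditions, and the annihilator identity $(\Delta\cap\Sigma_Q)^\circ=\Delta^\circ+\Sigma_Q^\circ$ for the covector conditions (which the paper writes, somewhat loosely, with a union symbol). Your explicit construction of the witness $\beta_{\hat{z}}$ for the reverse inclusion is a point the paper leaves implicit, and is a welcome addition rather than a departure.
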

Thus, the statement $(X_d^k,\mathfrak{D}^+L_d(q_k,q_k^+))\in (D_{\Delta_{Q_1}}^{d+} \oplus D_{\Delta_{Q_2}}^{d+})\boxtimes_d D_{\text{int}}^{d+}$ is equivalent to the statement $(X_d^k,\mathfrak{D}^+L_d(q_k,q_k^+))\in D_{\Delta_Q}^{d+}$ and to the interconnected equations given in \eqref{int_eqns_1}--\eqref{int_eqns_end}.

\begin{proof}
First we recall the definition of a (+) discrete induced Dirac structure,
\begin{equation}
\begin{aligned}
D_{\Delta_Q}^{d+} &= \{((z,z^+),\alpha_{\hat{z}}) \in (T^*Q\times T^*Q)\times T^*(Q\times Q^*) \ | \\
&\qquad (z,z^+) \in \Delta_{T^*Q}^{d+}, \alpha_{\hat{z}} - \Omega_{d+}^\flat(z,z^+)\in \Delta_{Q\times Q^*}^\circ\}.
\end{aligned}
\end{equation}
For $Q = Q_1\times Q_2$ we can write $z = (z_1,z_2)$, $z^+ = (z_1^+, z_2^+)$, $\hat{z} = (\hat{z}_1, \hat{z}_2)$, and $\alpha_{\hat{z}} = (\alpha_{\hat{z}_1}, \alpha_{\hat{z}_2}).$ Then, 
\begin{equation}
\begin{aligned}
D_{\Delta_{Q_1}}^{d+}\oplus D_{\Delta_{Q_2}}^{d+} &= \{((z,z^+),\alpha_{\hat{z}}) \ | \ ((z_1,z_1^+),\alpha_{\hat{z}_1}) \in D_{\Delta_{Q_1}}^{d+} \\
&\qquad \text{and} \ ((z_2,z_2^+),\alpha_{\hat{z}_2})\in D_{\Delta_{Q_2}}^{d+}\}
\end{aligned}
\end{equation}
and
\begin{multline}
(D_{\Delta_{Q_1}}^{d+}\oplus D_{\Delta_{Q_2}}^{d+})\boxtimes_d D_{\text{int}}^{d+} = \{((z,z^+),\alpha_{\hat{z}}) \ | \ \exists \beta_{\hat{z}} \in T_{\hat{z}}^*(Q\times Q^*) \  \text{with}\\  
((z,z^+),\alpha_{\hat{z}}+\beta_{\hat{z}}) \in D_{\Delta_{Q_1}}^{d+}\oplus D_{\Delta_{Q_2}}^{d+}, ((z,z^+),-\beta_{\hat{z}})\in D_{\text{int}}^{d+}\}.
\end{multline}
From the first condition we have $((z_1,z_1^+),\alpha_{\hat{z}_1}+\beta_{\hat{z}_1})\in D_{\Delta_{Q_1}}^{d+}$ and $((z_2,z_2^+),\alpha_{\hat{z}_2}+\beta_{\hat{z}_2})\in D_{\Delta_{Q_2}}^{d+}$.

Consider the distribution conditions first. The distribution condition for $D_{\Delta_Q}^{d+}$ is that $(z,z^+)\in\Delta_{T^*Q}^{d+}$. We can break the distribution condition down as
\begin{equation}
\begin{aligned}
((q,p),(q^+,p^+)) &\in \{((q,p),(q^+,p^+)) \in T^*Q \times T^*Q \ | \ (q,q^+) \in \Delta_Q^{d+}\} \\
&= \{((q,p),(q^+,p^+)) \ | \ (q,q^+) \in (\Delta_{Q_1}^{d+} \oplus \Delta_{Q_2}^{d+})\cap \Sigma_{Q}^{d+}\}\\
&= \{((q,p),(q^+,p^+)) \ | \ (q,q^+)\in \Sigma_{Q}^{d+}, (q_1,q_1^+)\in D_{\Delta_{Q_1}}^{d+}, \\ 
&\qquad\text{and} \ (q_2,q_2^+)\in D_{\Delta_{Q_2}}^{d+}\}.
\end{aligned}
\end{equation}
We now derive the distribution condition from $((z,z^+),\alpha_{\hat{z}}) \in (D_{\Delta_{Q_1}}^{d+}\oplus D_{\Delta_{Q_2}}^{d+})\boxtimes_d D_{\text{int}}^{d+}$. From $((z_1,z_1^+),\alpha_{\hat{z}_1}+\beta_{\hat{z}_1})\in D_{\Delta_{Q_1}}^{d+}$ and $((z_2,z_2^+),\alpha_{\hat{z}_2}+\beta_{\hat{z}_2})\in D_{\Delta_{Q_2}}^{d+}$, we get $(q_1,q_1^+)\in D_{\Delta_{Q_1}}^{d+}$ and $(q_2,q_2^+)\in D_{\Delta_{Q_2}}^{d+}$. From $((z,z^+),-\beta_{\hat{z}})\in D_{\text{int}}^{d+}$ we have $(q,q^+) \in \Sigma_{Q}^{d+}$. Thus, the distribution conditions derived from $((z,z^+),\alpha_{\hat{z}}) \in D_{\Delta_{Q}}^{d+}$ and $((z,z^+),\alpha_{\hat{z}}) \in (D_{\Delta_{Q_1}}^{d+}\oplus D_{\Delta_{Q_2}}^{d+})\boxtimes_d D_{\text{int}}^{d+}$ are equivalent.

Now we consider the second condition, coming from $\alpha_{\hat{z}} - \Omega_{d+}^\flat(z,z^+)\in \Delta_{Q\times Q^*}^\circ$ in the general definition. Recalling the definitions of  $\Delta_{Q\times Q^*}^\circ$, $\hat{z} = (q,p^+)$, and $\Omega_{d+}^\flat(z,z^+) = (q,p^+,p,q^+)$ gives
\begin{equation}
(q,p^+,\alpha_q,\alpha_{p^+}) - (q,p^+,p,q^+) \in \{(q,p,\alpha_q,0)\in T^*(Q\times Q^*) \mid \alpha_qdq\in \Delta_Q^\circ(q)\}.
\end{equation}
In the case of $\Delta_Q = (\Delta_{Q_1}\oplus\Delta_{Q_2})\cap\Sigma_{Q}$ we can rewrite this condition explicitly as
\begin{align}
\alpha_q-p \in [(\Delta_{Q_1}\oplus\Delta_{Q_2})\cap\Sigma_{Q}]^\circ(q_0),\\
\alpha_{p^+} - q^+ = 0.
\end{align}

Now consider the statement $((z,z^+),\alpha_{\hat{z}}) \in (D_{\Delta_{Q_1}}^{d+}\oplus D_{\Delta_{Q_2}}^{d+})\boxtimes_d D_{\text{int}}^{d+}$. First examine $((z,z^+),-\beta_{\hat{z}}) \in D_{\text{int}}^{d+}$. This implies that $-\beta_{\hat{z}} - \Omega_{\text{int}}^{d+}(z,z^+) \in \Sigma_{Q\times Q^*}^\circ$, i.e., $(q,p^+,-\beta_q, -\beta_{p^+}) \in \{ (q,p,\alpha_q, 0) | \alpha_qdq \in \Sigma_{Q}^\circ(q)\}$. Thus, we must have $-\beta_{p^+} = 0$ and $-\beta_q \in \Sigma_{Q}^\circ(q)$. Now we examine $((z,z^+),\alpha_{\hat{z}}+\beta_{\hat{z}}) \in D_{\Delta_{Q_1}}^{d+} \oplus D_{\Delta_{Q_2}}^{d+}$. From the subsection above, we then have that $((z_i,z_i^+),\alpha_{\hat{z}_i}+\beta_{\hat{z}_i})\in D_{\Delta_{Q_i}}^{d+}$ which gives the conditions
\begin{align}
(q_i,p_i^+,\alpha_{q_i}+\beta_{q_i} - p_i, \alpha_{p_i^+} + \beta_{p_i^+} - q_i^+) \in \{(q,p,\alpha,0) | \alpha dq \in \Delta_{Q_i}^\circ\}.
\end{align}
We already know that $\beta_{p^+} = 0$, so these conditions become
\begin{align}
\alpha_{p_i^+} - q_i^+ = 0, \\
\alpha_{q_i} + \beta_{q_i} -p_i \in \Delta_{Q_i}^\circ.
\end{align}
Putting the two indices together gives
\begin{align}
\alpha_{p^+} - q^+ = 0, \\
\alpha_q - p + \beta_q \in \Delta_{Q_1}^\circ\oplus\Delta_{Q_2}^\circ . \label{di_tensordistcond}
\end{align}
We have already established that $\beta_q \in \Sigma_{Q}^\circ(q)$, so \eqref{di_tensordistcond} becomes
\begin{equation}
\alpha_q-p \in (\Delta_{Q_1}^\circ \oplus \Delta_{Q_2}^\circ)(q)\cup \Sigma_{Q}^\circ(q),
\end{equation}
i.e.,
\begin{equation}
\alpha_q - p \in [(\Delta_{Q_1}\oplus\Delta_{Q_2})\cap\Sigma_{Q}]^\circ(q).
\end{equation}
Thus, we derive precisely the same conditions from both $D_{\Delta_Q}^{d+}$ and $(D_{\Delta_{Q_1}}^{d+}\oplus D_{\Delta_{Q_2}}^{d+})\boxtimes_d D_{\text{int}}^{d+}$ and the two structures are equivalent.
\end{proof}

We have now shown that we can interconnect discrete Dirac systems in a way consistent with the variational discretization of the full system and that the Dirac structure preserved by the interconnected discrete system can be viewed as a product of $D_{\Delta_{Q_1}}^{d+}\oplus D_{\Delta_{Q_2}}^{d+}$ with a discrete interaction Dirac structure, analogous to the continuous case. In defining $D_{\text{int}}^{d+}$, we have extended the notion of discrete Dirac structures beyond the induced structures of \cite{LeOh2011}. This extension as well as the definition of $\boxtimes_d$, which is indifferent to whether its operands are induced structures, raises the question of whether we can make a more general definition of discrete Dirac structures for which induced structures are just a special case. We discuss this more in the future work section below.

\section{Numerical Examples}

Continuing the theme of reproducing \cite{JaYo2014} discretely, we now work through the simulation of some of the interconnected examples presented there. We will rehash the setup of each example and then give details of its numerical implementation. In this section we use superscripts to denote coordinates of $q, v$, and $p$ and subscripts to denote numerical time-steps.

\subsection{A chain of spring masses}

The first example is a chain of three spring masses attached to a wall. We consider it to be the interconnection of a chain of two spring masses with the third spring mass pair. Thus, we have two primitive systems with configuration spaces $Q_1= Q_2 = \mathbb{R}^2$. The first system has coordinates $(q^1,q^2)$, the second $(\bar{q}^2, q^3)$. Figures \ref{ChainOfSprings} and \ref{TornSprings} illustrate these two viewpoints. Note that in the torn case we introduce an extra variable, $\bar{q}^2$, to mark the position of the left end of the spring.

\begin{figure}
\begin{center}
\includegraphics[scale=1]{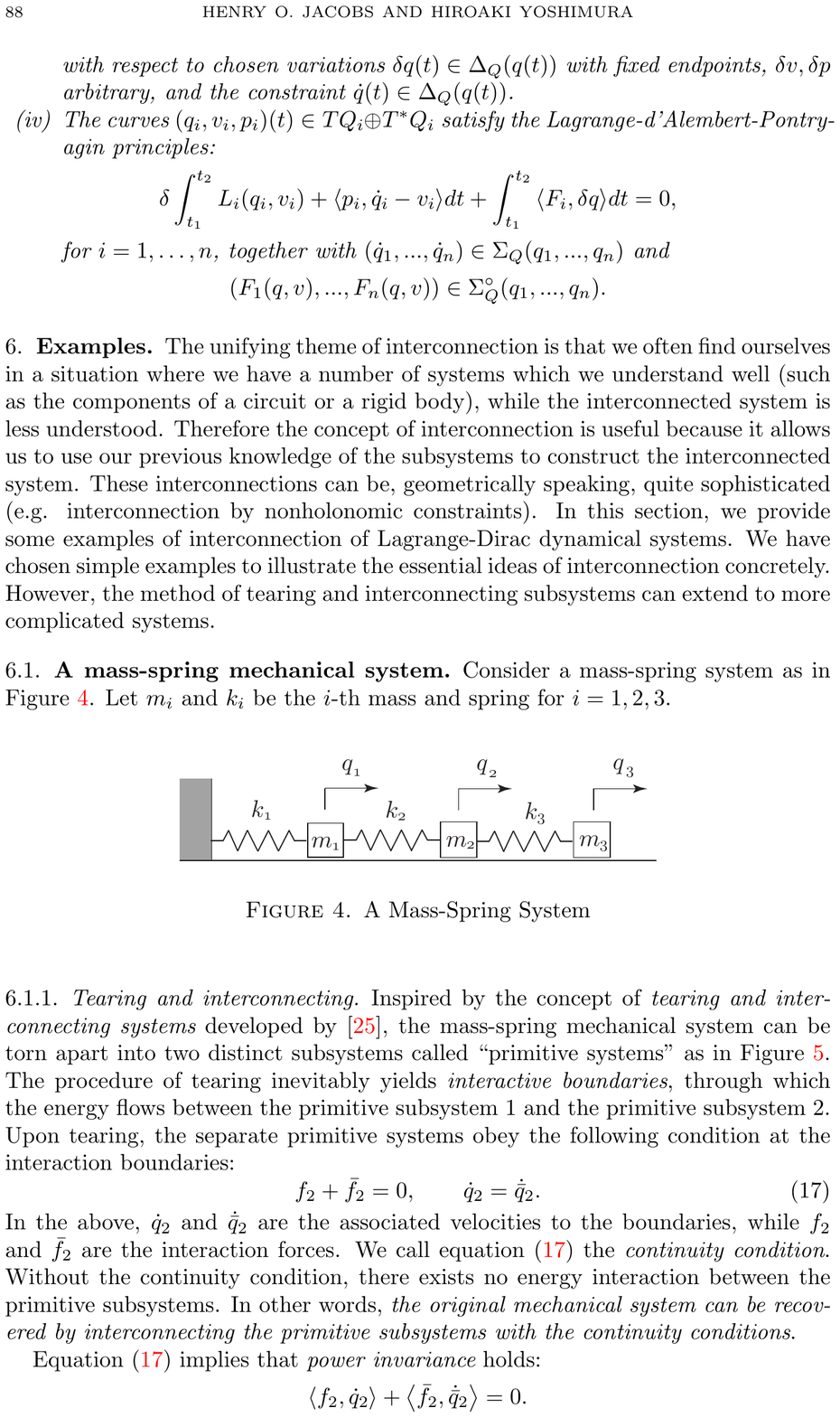}
\end{center}
\caption{A chain of spring masses like that presented in \cite{JaYo2014}.}
\label{ChainOfSprings}
\end{figure}

\begin{figure}
\begin{center}
\includegraphics[scale=1]{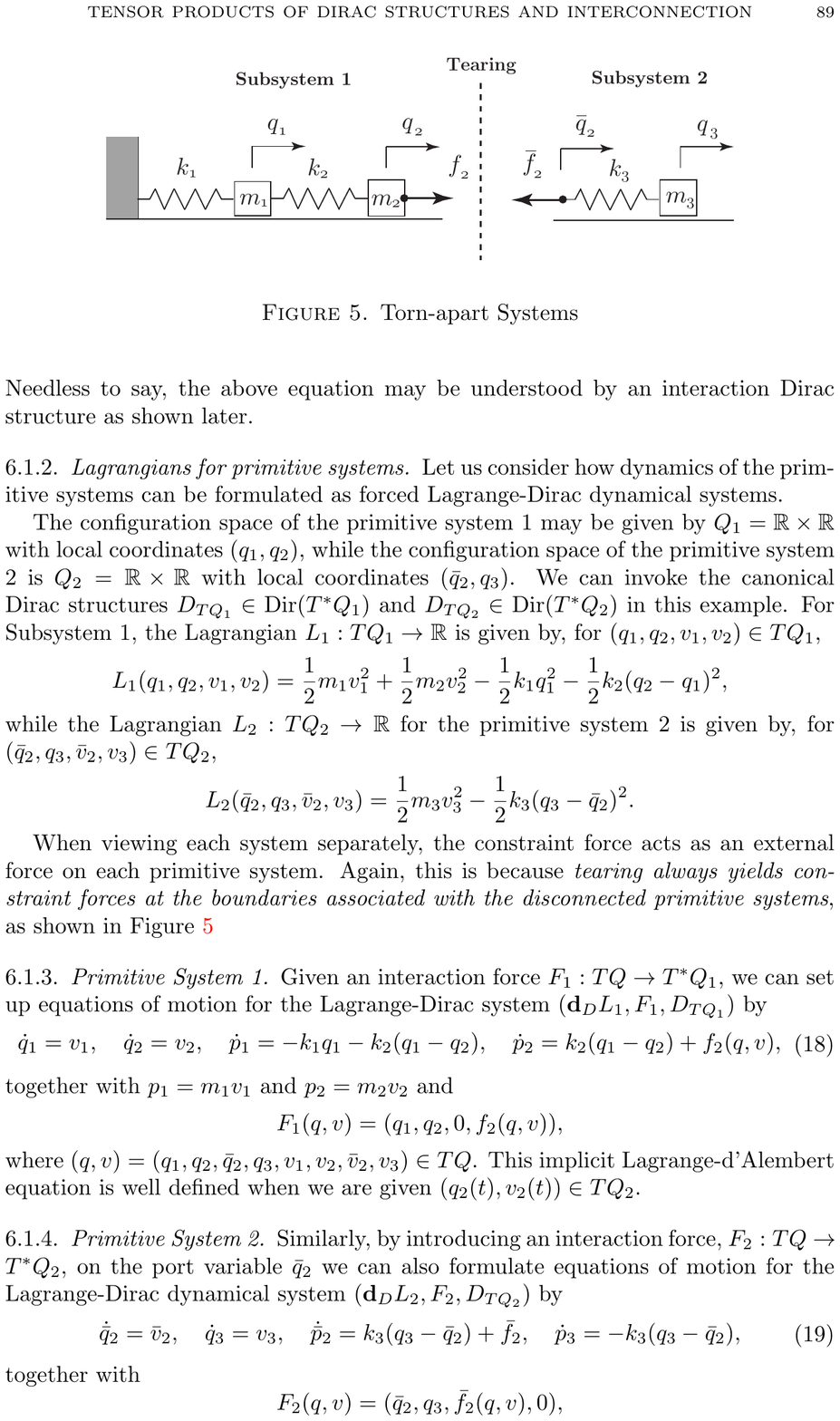}
\end{center}
\caption{The chain of springs as two primitive systems \cite{JaYo2014}.}
\label{TornSprings}
\end{figure}

Viewed separately, the two primitive systems each have the trivial constraint distribution $\Delta_{Q_i} = TQ_i$ and Lagrangians
\begin{equation}
L^1(q^1,q^2,v^1,v^2) = \frac{1}{2}m_1(v^1)^2 + \frac{1}{2}m_2(v^2)^2 - \frac{1}{2}k_1(q^1)^2 - \frac{1}{2}k_2(q^2-q^1)^2
\end{equation}
and
\begin{equation}
L^2(\bar{q}^2,q^3,\bar{v}^2,v^3) = \frac{1}{2}m_3(v^3)^2 - \frac{1}{2}k_3(q^3-\bar{q}^2)^2.
\end{equation}
To interconnect the systems into the chain in Figure \ref{ChainOfSprings}, we need to enforce the constraint $q^2 = \bar{q}^2$. This is a holonomic constraint, but within the framework of Dirac systems we enforce it with a compatible initial condition and a distribution constraint $\Sigma_{\text{int}}(q) = \{v \in T_qQ \ | \ v^2 = \bar{v}^2\}$. Thus, $\Sigma_{\text{int}}^\circ = \Span\{\omega\}$ for $\omega = dq^2 - d\bar{q}^2 \in T_q^*Q$. In coordinates, $\omega = (0,1,-1,0)$.

We discretized both the simple chain of springs in Figure \ref{ChainOfSprings} and the interconnected version described above using the retraction-based methodology laid out in \cite{LeOh2011} and used in the circuit example therein. Namely, we choose the vector space retraction $R_q(v) = q+vh$ for $h$ the timestep, giving $R_{q_k}^{-1}(q_{k+1}) = \frac{1}{h}(q_{k+1}-q_k)$. Then, we set
\begin{align}
L_d^i(q_k^i,q_{k+1}^i) &= hL(q_k^i,R_{i,q_k^i}^{-1}(q_{k+1}^i)), \label{RetractionBasedDiscreteLagrangian}\\
\omega_{d+,i}^a(q_k^i,q_{k+1}^i) &= \langle \omega_i^a(q_k^i), R_{i,q_k^i}^{-1}(q_{k+1}^i) \rangle, \\
\alpha_{d+}^b(q_k, q_{k+1}) &= \langle \alpha^b(q_k), R_{q_k}^{-1}(q_{k+1})\rangle.
\end{align}

For this interconnected system, we then have
\begin{align}
L_d^1(&q_k^1, q_k^2,q_{k+1}^1,q_{k+1}^2) \\
&= h\left[\frac{m_1}{2}\left(\frac{q_{k+1}^1-q_k^1}{h}\right)^2 + \frac{m_2}{2}\left(\frac{q_{k+1}^2-q_k^2}{h}\right)^2 - \frac{k_1}{2}\left(q_k^1\right)^2 - \frac{k_2}{2}\left(q_k^2 - q_k^1\right)^2 \right], \notag \\
L_d^2(&\bar{q}_k^2, q_k^3,\bar{q}_{k+1}^2,q_{k+1}^3) = h\left[\frac{m_3}{2}\left(\frac{q_{k+1}^3 - q_k^3}{h}\right)^2 - \frac{k_3}{2}\left(q_k^3 - \bar{q}_k^2\right)^2\right], \\
\alpha&_{d+}(q_k, q_{k+1}) = \frac{1}{h}\left[\left(q_{k+1}^2-q_k^2\right)-\left(\bar{q}_{k+1}^2 - \bar{q}_k^2\right)\right].
\end{align}
Then the interconnected discrete Dirac equations \eqref{int_eqns_1} through \eqref{int_eqns_end} become
\begin{subequations}
\begin{align}
p_{k+1}^1 &= \frac{m_1}{h}\left(q_{k+1}^1- q_k^1\right), \label{int_springs_1} \\
p_{k+1}^2 &= \frac{m_2}{h}\left(q_{k+1}^2- q_k^2\right), \\
\bar{p}_{k+1}^2 &= 0, \\
p_{k+1}^3 &= \frac{m_3}{h}\left(q_{k+1}^3 - q_k^3\right), \\
p_k^1 &= \frac{m_1}{h}\left(q_{k+1}^1 - q_k^1\right) + hk_1q_k^1 - hk_2\left( q_k^2 - q_k^1\right), \\
p_k^2 &= \frac{m_2}{h}\left(q_{k+1}^2 - q_k^2\right) + hk_2\left(q_k^2 - q_k^1\right) + \lambda, \\
\bar{p}_k^2 &= -hk_3\left(q_k^3 - \bar{q}_k^2\right) - \lambda, \\
p_k^3 &= \frac{m_3}{h}\left(q_{k+1}^3 - q_k^3\right) + hk_3\left(q_k^3 - \bar{q}_k^2\right), \\
0 &= \frac{1}{h}\left[\left(q_{k+1}^2-q_k^2\right)-\left(\bar{q}_{k+1}^2 - \bar{q}_k^2\right)\right]. \label{int_springs_n}
\end{align}
\end{subequations}

For comparison, we apply the same discretization to the monolithic system, obtaining
\begin{align}
L_d(&q_k^1, q_k^2,q_k^3q_{k+1}^1,q_{k+1}^2,q_{k+1}^3) = h\Bigg[\frac{m_1}{2}\left(\frac{q_{k+1}^1-q_k^1}{h}\right)^2 + \frac{m_2}{2}\left(\frac{q_{k+1}^2-q_k^2}{h}\right)^2 \\
&+ \frac{m_3}{2}\left(\frac{q_{k+1}^3 - q_k^3}{h}\right)^2 - \frac{k_1}{2}\left(q_k^1\right)^2 - \frac{k_2}{2}\left(q_k^2 - q_k^1\right)^2 - \frac{k_3}{2}\left(q_k^3 - q_k^2\right)^2\Bigg]. \notag
\end{align}
We use the (+) discrete Dirac equations. Here they simplify to the discrete Euler-Lagrange equations,
\begin{subequations}
\begin{align}
p_{k+1}^1 &= \frac{m_1}{h}\left(q_{k+1}^1- q_k^1\right), \label{mono_springs_1}\\
p_{k+1}^2 &= \frac{m_2}{h}\left(q_{k+1}^2- q_k^2\right), \\
p_{k+1}^3 &= \frac{m_3}{h}\left(q_{k+1}^3 - q_k^3\right), \\
p_k^1 &= \frac{m_1}{h}\left(q_{k+1}^1 - q_k^1\right) + hk_1q_k^1 - hk_2\left( q_k^2 - q_k^1\right), \\
p_k^2 &= \frac{m_2}{h}\left(q_{k+1}^2 - q_k^2\right) + hk_2\left(q_k^2 - q_k^1\right) -hk_3\left(q_k^3 - q_k^2\right), \\
p_k^3 &= \frac{m_3}{h}\left(q_{k+1}^3 - q_k^3\right) + hk_3\left(q_k^3 - q_k^2\right). \label{mono_springs_n}
\end{align}
\end{subequations}

Both systems are fully explicit. We set $m_i = k_i = 1$ and solve the equations using Matlab. The initial conditions are
\begin{align*}
q_0^1 = 0, \\
q_0^2 = \bar{q}_0^2 = 1,\\
q_0^3 = 2,\\
p_0^1 = 0,\\
p_0^2 = \bar{p}_0^2 = 0,\\
p_0^3 = 3.
\end{align*}
We solve the system for 1,000 iterations with a time-step of $h=0.01$. Figures \ref{Fig_SpringsPositions} through \ref{Fig_SpringsConstraintDeviation} show the results of this numerical experiment. Figure \ref{Fig_IntSprings_LongTimeEnergy} uses the same parameters, initial conditions, and time-step but runs for $100,000$ iterations. For an explicit system as simple as this one, the added computational work of solving equations \eqref{int_springs_1} - \eqref{int_springs_n} vs. equations \eqref{mono_springs_1} - \eqref{mono_springs_n} scales linearly with the number of dummy variables and constraints needed to specify the interconnection. This illustrates the point made in the introduction that this technique would be most useful in a situation involving many complex components but relatively simple interactions among components. For this particular example the additional work is imperceptible in practice. Over 1,000 runs the monolithic system has an minimum runtime of 0.0028 seconds compared to 0.0029 seconds for the interconnected system.

\begin{figure}
\includegraphics[scale=0.5]{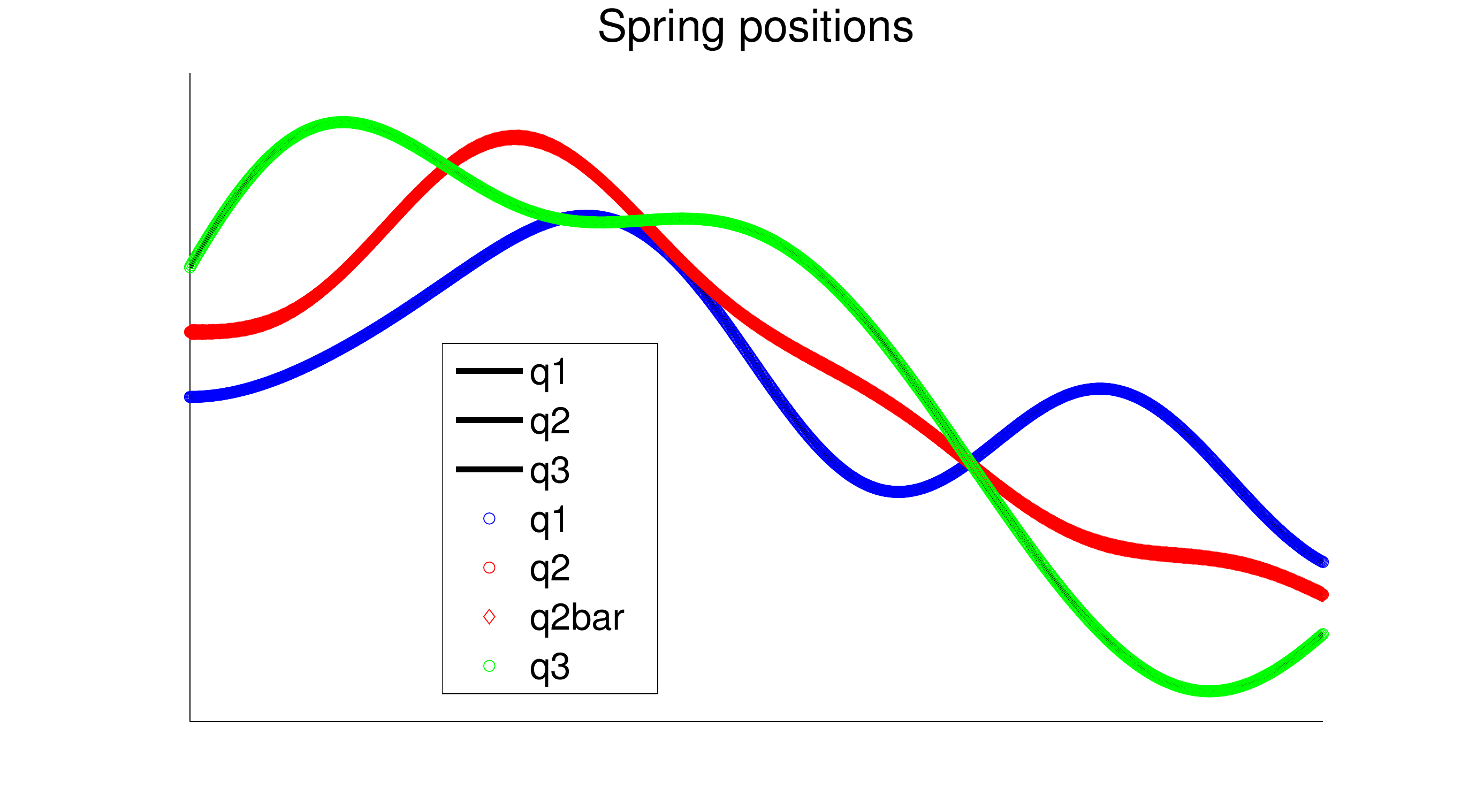}
\caption{A comparison of spring positions over time. Solutions from discretizing the full system are plotted as lines. Solutions from discretizing as two interconnected systems are plotted as hollow shapes. The shapes lie directly over the lines.}
\label{Fig_SpringsPositions}
\end{figure}
                                                                                            
\begin{figure}
\includegraphics[scale=0.5]{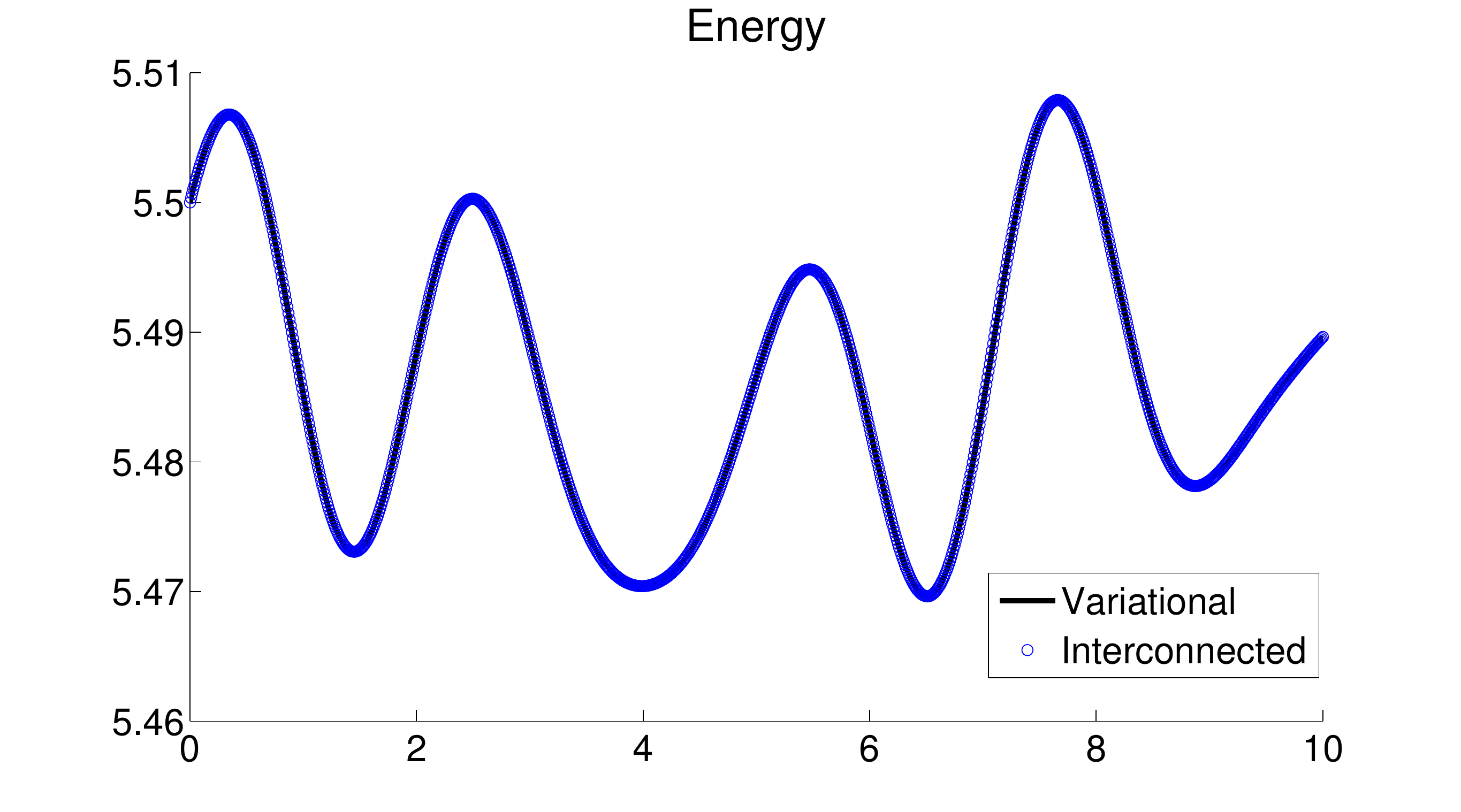}
\caption[A comparison of the spring system energy over time.]{A comparison of the spring system energy over time. The line labeled ``variational" is the energy of the full-system discretization. The hollow circles show the energy for the discretization as two interconnected systems. The hollow circles lie directly on top of the line. Note also the small scale of the vertical axis.}
\label{Fig_SpringsEnergy}
\end{figure}

\begin{figure}
\includegraphics[scale=0.5]{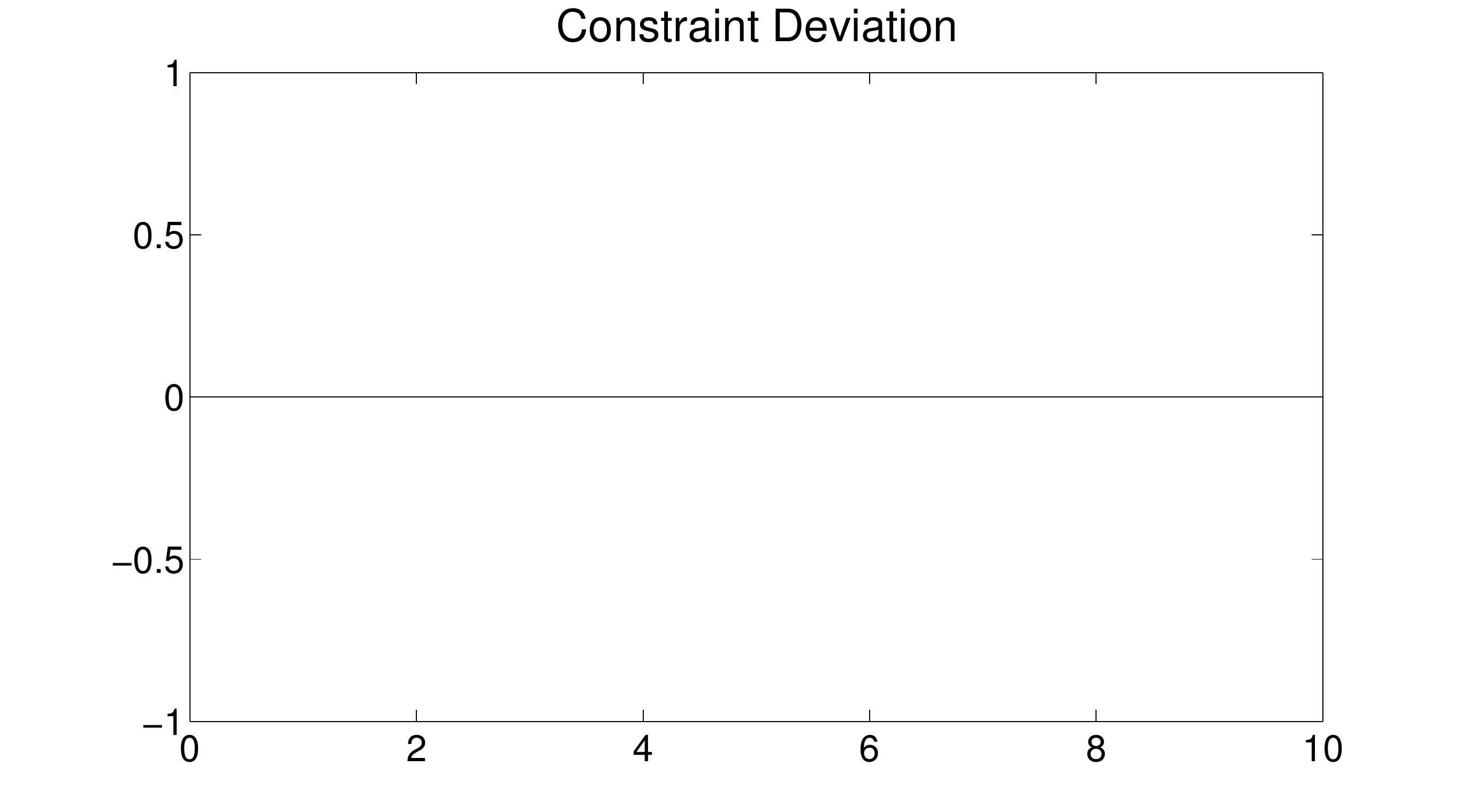}
\caption{Deviation of the interconnected discretization from the constraint $q_2 = \bar{q}_2$ over time. We see that the constraint is preserved to machine precision.}
\label{Fig_SpringsConstraintDeviation}
\end{figure}

\begin{figure}
\includegraphics[scale=0.5]{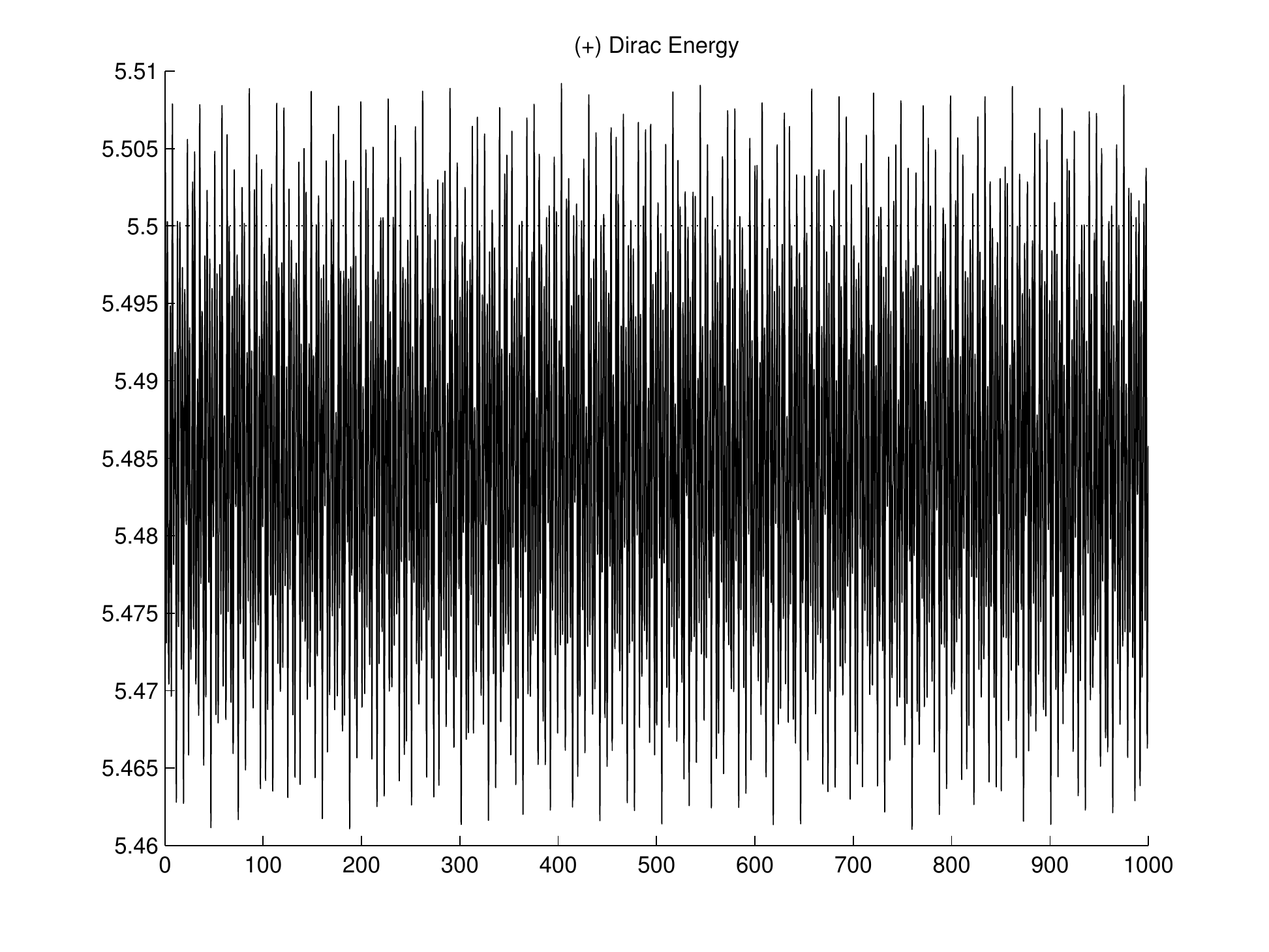}
\caption{The interconnected discretization's energy oscillates over very long times (100,000 iterations at $h=0.01$), much like the energy of classical variational discretization.}
\label{Fig_IntSprings_LongTimeEnergy}
\end{figure}

Figures \ref{Fig_SpringsPositions} and \ref{Fig_SpringsEnergy} compare the interconnected discretization with the discretization of the full system. Note that we are more concerned with reproducing the behavior of the full discretization than with the overall accuracy of the simulation. We have excellent agreement between the two discretizations, with the interconnected results obscuring the full discretization in the figures by lying directly on top. It is also difficult to distinguish in Figure \ref{Fig_SpringsPositions} between the trajectories of $q^2$ and $\bar{q}^2$. This is because, as shown in Figure \ref{Fig_SpringsConstraintDeviation}, the interconnected discretization preserves the $q^2 = \bar{q}^2$ constraint to machine precision. Thus, the trajectories lie atop one another in Figure \ref{Fig_SpringsPositions}. Figure \ref{Fig_SpringsEnergy} shows good agreement between the energy of the full system discretization and that of the interconnected discretization. Lastly, Figure \ref{Fig_IntSprings_LongTimeEnergy} shows that the interconnected discretization exhibits the oscillatory energy behavior characteristic of variational integrators.

\subsection{An LC circuit}

The next example is a very simple parallel RLC circuit which we consider as the joining of a capacitor to the RL loop component. We borrow the illustrations of this idea from \cite{JaYo2014} in Figures \ref{Fig_Circuit} and \ref{Fig_TornCircuit}.

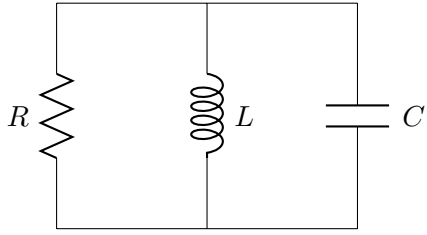
\begin{figure}
\[
\begin{circuitikz}
      \draw (0,0)
      to[R=$R$](0,3)
      to(2,3)
      to[L=$L$] (2,0)
      to(0,0);
      \draw(2,3)
      to(4,3)
      to[C=$C$](4,0)
      to(2,0);
   \end{circuitikz}
\]
\caption{A simple parallel RLC circuit \cite{JaYo2014}.\label{Fig_Circuit}}
\end{figure}

\begin{figure}
\[
\begin{circuitikz}
      \draw (0,0)
      to[R=$R$](0,3)
      to(2,3)
      to[L=$L$] (2,0)
      to(0,0);
      \draw(2,3)
      to(4,3)
      to(4,0)
      to(2,0);
      \draw(4,1.5)
      node[draw, fill=white, fill opacity = 1] at (4,1.5) {$S_1$};
      \draw(2,3.5)
      node[] at (2,3.5) {Primitive circuit 1};
   \end{circuitikz}
   \hspace*{0.25in}
\begin{circuitikz}
      \draw (0,0)
      to(0,3)
      to(2,3)
      to[C=$C$] (2,0)
      to(0,0);
      \draw(0,1.5)
      node[draw, fill=white, fill opacity = 1] at (0,1.5) {$S_2$};
      \draw(2,3.5)
      node[] at (1,3.5) {Primitive circuit 2};
   \end{circuitikz}
\]
\caption{Considering the circuit as two primitive circuits. The $S_i$ boxes show the possible points of connection and represent the influence of any connected circuit components \cite{JaYo2014}.}
\label{Fig_TornCircuit}
\end{figure}
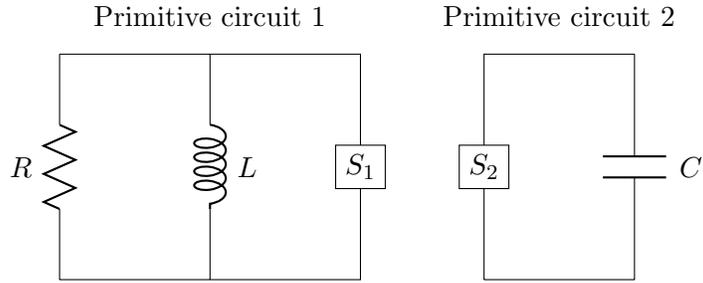

When considering electric circuits as Lagrangian, Hamiltonian or Lagrange--Dirac systems we take the charges as the configuration variables. So the configuration space for the undivided circuit is $\mathbb{R}^3$ with coordinates $(q^R,q^L,q^C)$ representing the charge in the resistor, inductor and capacitor, respectively. Then, $\dot{q}$ represents the currents in each component. The Lagrangian for any circuit is given by the magnetic energy stored in any inductors minus the electric potential energy of any capacitors. For the first primitive circuit we have $Q_1 = \mathbb{R}^3$ with local coordinates $q^1 = (q^R,q^L,q^{S_1})$. The $q^{S_1}$ variable represents the possible point of connection shown in Figure \ref{Fig_TornCircuit} and represents the influence of any connected circuit components. The Lagrangian is just the magnetic energy,
\begin{equation}
L^1(q^1,v_1) = \frac{1}{2}l(v^L)^2,
\end{equation}
where $l$ is the inductance. The circuit has a nontrivial constraint distribution given by Kirchoff's circuit law,
\begin{equation}
\Delta_{Q_1}(q^1) = \{v_1 = (v^R,v^L,v^C) \in T_{q^1}Q_1 \ | \ v^R - v^L-v^{S_1} = 0\}.
\end{equation}
Thus, $\Delta_{Q_1}^\circ = \Span\{\omega_1\}$ for $\omega_1 = dq^R - dq^L + dq^{S_1}$. In coordinates, $\omega_1 = (1,-1,-1)$. This circuit also has an external force due to the resistor, given by $f(q,v) = (q^R,q^L,q^{S_1},-Rv^R,0,0)\in T^*Q_1$.

The second primitive circuit has configuration space $Q_2 = \mathbb{R}^2$ with local coordinates $q_2 = (q^{S_2},q^C)$. Here, the Lagrangian is given by
\begin{equation}
L^2(q_2,v_2) = -\frac{1}{2C}(q^C)^2,
\end{equation}
where $C$ is the capacitance. Again, we have a nontrivial constraint coming from circuit laws,
\begin{equation}
\Delta_{Q_2}(q_2) = \{ v_2 = (v^{S_2},v^C)\in T_{q_2}Q_2 \ | \  v^C - v^{S_2} = 0\}.
\end{equation}
Hence, $\Delta_{Q_2}^\circ = \Span\{\omega_2\}$ for $\omega_2 = -dq^{S_2} + dq^C = (-1,1)$.

To interconnect the two circuits, we set $Q = Q_1\times Q_2, L = L^1 + L^2$ and use
\begin{equation}
\Sigma_{\text{int}} = \{(v^R,v^L,v^{S_1},v^{S_2},v^C) \in TQ \ | \ v^{S_1} = v^{S_2}\}.
\end{equation}

Again, we want to discretize the system according to the retraction-based method of \cite{LeOh2011}. This yields the discrete Lagrangians and constraints for the interconnected system,
\begin{align}
L_d^1(q_k^R,q_k^L,q_k^{S_1},q_{k+1}^R,q_{k+1}^L,q_{k+1}^{S_1}) = \frac{hl}{2}\left(\frac{q_{k+1}^L - q_k^L}{h}\right)^2, \\
\omega_{d+,1}^1(q_k^R,q_k^L,q_{k+1}^R,q_k^{S_1},q_{k+1}^L,q_{k+1}^{S_1}) = \frac{1}{h}\left[ \left(q_{k+1}^R - q_k^R\right) - \left(q_{k+1}^L - q_k^L\right) + \left(q_{k+1}^{S_1} - q_k^{S_1}\right)\right],\\
L_d^2(q_k^{S_2},q_k^C,q_{k+1}^{S_2},q_{k+1}^C) = -\frac{h}{2C}\left(q_k^C\right)^2, \\
\omega_{d+,2}(q_k^{S_2},q_k^C,q_{k+1}^{S_2},q_{k+1}^C) = \frac{1}{h}\left[ \left(q_{k+1}^C - q_k^C\right) - \left(q_{k+1}^{S_2} - q_k^{S_2}\right)\right], \\
\alpha_{d+}(q^k,q^{k+1}) = \frac{1}{h}\left[\left(q_{k+1}^{S_1} - q_k^{S_1}\right) - \left(q_{k+1}^{S_2} - q_k^{S_2}\right)\right].
\end{align}

To address the force in this system coming from the resistor, we must use the forced discrete Dirac equations, \eqref{ForcedDiscreteDiracFirst}-\eqref{ForcedDiscreteDiracLast}. Chapter two of \cite{helen_thesis} develops these equations.
\begin{subequations}
\begin{align}
0&=\omega_{d+}^a(q_k, q_{k+1}), \label{ForcedDiscreteDiracFirst}\\ 
q_{k+1} &= q_k^+,\\
p_{k+1} &= D_2L_d(q_k,q_k^+) + f_d^+(q_k, q_k^+), \\
p_k&= - D_1L_d(q_k,q_k^+) - f_d^-(q_k, q_k^+) + \mu_a\omega^a(q_k).
\label{ForcedDiscreteDiracLast}
\end{align}
\end{subequations}
These equations can be combined with the interconnected Dirac mechanics of \eqref{int_eqns_1}-\eqref{int_eqns_end} to give (for $m_i$ constraints on sub-system $i$ and $l$ interconnection constraints)
\begin{subequations}
\begin{align}
p_{k+1}^i &= D_2L_d^i(q_k^i,q_{k+1}^i) + f_d^{i,+}(q_k^i,q_{k+1}^i), \label{forced_int_eqns_1}\\
p_k^i &= -D_1L_d^i(q_k^i, q_{k+1}^i) - f_d^{i,-}(q_k^i,q_{k+1}^i)+ \mu_a\omega_{i}^a(q_k^i)+\lambda_b\alpha_i^b(q_k),\\
0&=\omega_{d+,i}^a(q_k^i,q_{k+1}^i), & a&=1,\dots,m_i,\\
0&=\alpha_{d+}^b(q_k,q_{k+1}), & b &= 1,\dots,l. \label{forced_int_eqns_end}
\end{align}
\end{subequations}
Equations \eqref{forced_int_eqns_1} - \eqref{forced_int_eqns_end} clearly simplify to the interconnected Dirac equations in the absence of forces and to the forced Dirac equations in the absence of interconnections. They also work well in practice on this numerical example. We know that forces and constraints are equivalent in continuous mechanics, and we leave as future work a rigorous exploration of such an equivalence at the discrete level.

To arrive at an equivalence-preserving discretization, we interpret the retraction based scheme \eqref{RetractionBasedDiscreteLagrangian} as a quadrature rule and define the ($\pm$) discrete forces using the same rule. This gives
\begin{equation}
\begin{aligned}
f_d^+(q_k,q_{k+1}) &=  hf_L\left(q_k, \frac{q_{k+1}-q_k}{h}\right)\left[\frac{\partial q_{k}}{\partial q_{k+1}}\right] \\
&= 0.
\end{aligned}
\end{equation}
and
\begin{equation}
\begin{aligned}
f_d^-(q_k,q_{k+1}) &=  -hf_L\left(q_k, \frac{q_{k+1}-q_k}{h}\right)\left[\frac{\partial q_{k}}{\partial q_{k}}\right] \\
&= -h\left(-R\left[\frac{q_{k+1}^R-q_k^R}{h}\right],0,0\right) \\
&= \left(R\left[q_{k+1}^R-q_k^R\right],0,0\right).
\end{aligned}
\end{equation}

The (+) discrete forced, interconnected Lagrange-Dirac equations are then
\begin{subequations}
\begin{align}
p_{k+1}^R &= 0, \\
p_{k+1}^L &= \frac{l}{h}\left(q_{k+1}^L - q_k^L\right), \\
p_{k+1}^{S_1} &= 0, \\
p_{k+1}^{S_2} &= 0, \\
p_{k+1}^C &= 0, \\
p_k^R &= -R\left(q_{k+1}^R - q_k^R\right) + \mu_1, \\
p_k^L &= \frac{l}{h}\left(q_{k+1}^L - q_k^L\right) - \mu_1, \\
p_k^{S_1} &= -\mu_1 + \lambda, \\
p_k^{S_2} &= \mu_2 - \lambda, \\
p_k^C &= \frac{h}{C}q_k^C - \mu_2, \\
0 &= \left(q_{k+1}^R - q_k^R\right) - \left(q_{k+1}^L - q_k^L\right) - \left(q_{k+1}^{S_1} - q_k^{S_1}\right), \\
0 &= \left(q_{k+1}^C - q_k^C\right) - \left(q_{k+1}^{S_2} - q_k^{S_2}\right) , \\
0 &= \left(q_{k+1}^{S_1} - q_k^{S_1}\right) - \left(q_{k+1}^{S_2} - q_k^{S_2}\right).
\end{align}
\end{subequations}

For the monolithic system, we have the Lagrangian
\begin{equation}
L(q^R,q^L,q^C,v^R,v^L,v^C) = \frac{l}{2}(v^L)^2 - \frac{1}{2C}(q^C)^2
\end{equation}
and the constraint distribution
\begin{equation}
\Delta_Q(q^R,q^L,q^C) = \{v = (v^R,v^L,v^C) \in T_qQ \ | \ v^R - v^L - v^C = 0 \}.
\end{equation}
The (+) discrete forced Lagrange-Dirac equations are then
\begin{subequations}
\begin{align}
p_{k+1}^R &= 0, \\
p_{k+1}^L &= \frac{l}{h}\left(q_{k+1}^L - q_k^L\right), \\
p_{k+1}^C &= 0, \\
p_k^R &= -R\left(q_{k+1}^R - q_k^R\right) + \mu, \\
p_k^L &= \frac{l}{h}\left(q_{k+1}^L - q_k^L\right) - \mu, \\
p_k^C &= \frac{h}{C}q_k^C - \mu, \\
0 &= \left(q_{k+1}^R - q_k^R\right) - \left(q_{k+1}^L - q_k^L\right) - \left(q_{k+1}^C - q_k^C\right).
\end{align}
\end{subequations}

We set the following parameters and initial conditions
\begin{align*}
R &= 1, \\
l &= 0.75, \\
C &= 3, \\
q_0^R &= q_0^L = q_0^{S_1} = q_0^{S_2} = q_0^C = 0, \\
p_0^R &= p_0^{S_1} = p_0^{S_2} = p_0^C = 0, \\
p_0^L &= 10*l.
\end{align*}
We then compared the two discretizations over 400 interations with time-step $h=0.1$. Figures \ref{Fig_CircuitCapacitorComp} through \ref{Fig_CircuitConstraintDeviation} show the results. Again for this simple example the equations are fully explicit with the added computational work linear in the number of dummy variables needed to express the interconnection. Over 1,000 runs the monolithic system has a minimum runtime of $8.16 \times 10^{-4}$ seconds vs. $8.51 \times 10^{-4}$ seconds for the interconnected system.

\begin{figure}
\includegraphics[scale=0.5]{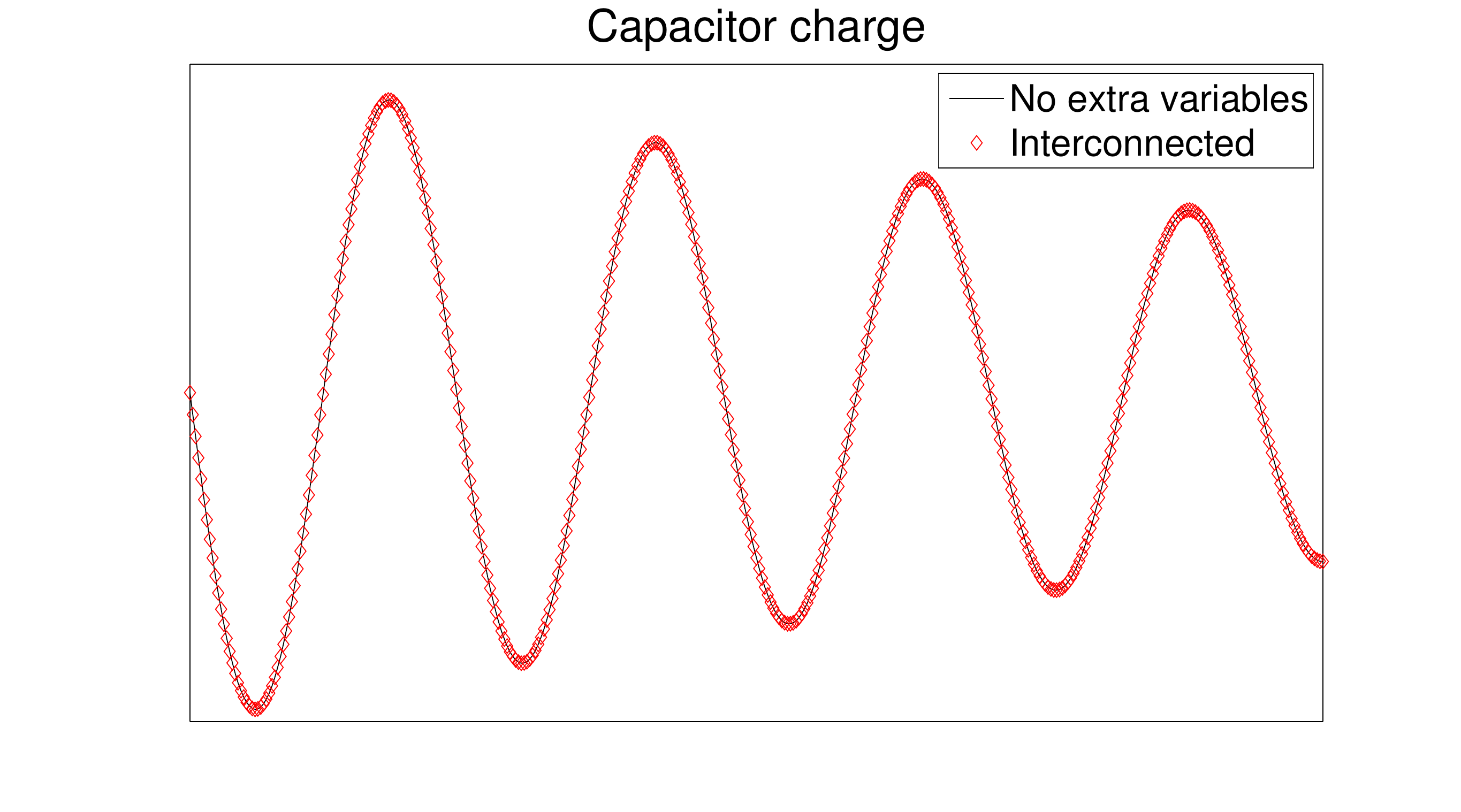}
\caption{A comparison of the capacitor charge in the system generated by the monolithic and interconnected models. The two agree very closely.}
\label{Fig_CircuitCapacitorComp}
\end{figure}

\begin{figure}
\includegraphics[scale=0.5]{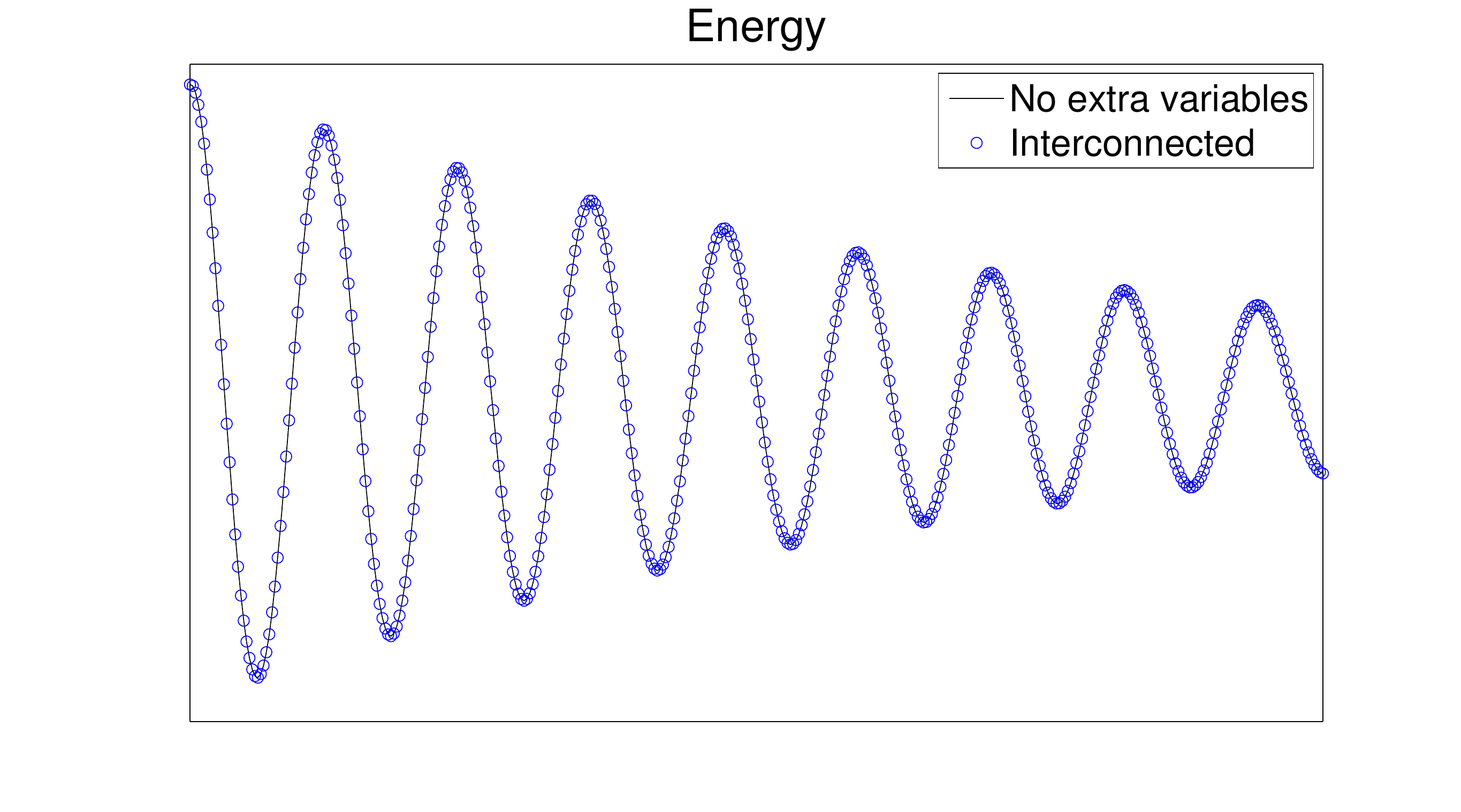}
\caption{The energy in the circuit system generated by the monolithic vs. the interconnected model. The two agree very closely.}
\label{Fig_CircuitEnergyComp}
\end{figure}

\begin{figure}
\includegraphics[scale=0.5]{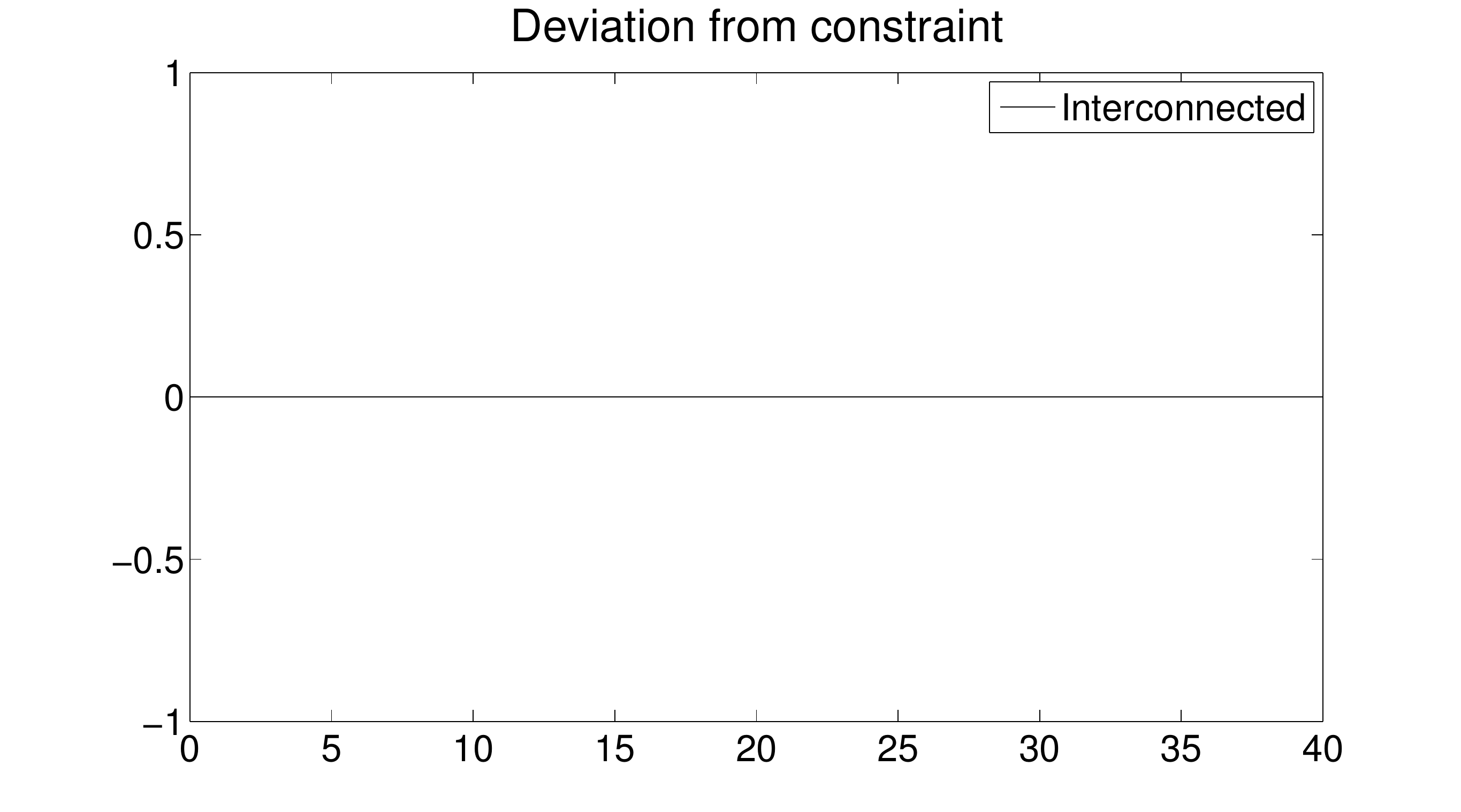}
\caption{The interconnected model preserves the interconnection constraint to machine precision.}
\label{Fig_CircuitConstraintDeviation}
\end{figure}

Figure \ref{Fig_CircuitCapacitorComp} shows that the capacitor charge of the interconnected discretization correctly replicates that of the full system discretization. In Figure \ref{Fig_CircuitEnergyComp}, we see that the same is true for the overall circuit energy. Lastly, Figure \ref{Fig_CircuitConstraintDeviation} shows preservation of the constraint to machine precision in this case as well. Thus, once again, our interconnected discretization behaves equivalently to the full system discretization, as predicted by our theoretical development.

\section{Conclusions and future work}

We have presented a framework for interconnecting discrete Lagrange--Dirac systems, extending the work of \cite{LeOh2011}. Our view of interconnections is based on the perspective presented in \cite{JaYo2014}. In \cite{JaYo2014}, the authors emphasize the equivalence between the constrained view and the interaction force view of interconnections. Our discrete interconnections so far take the constrained point of view. In future work, we would like to see an equivalent interaction-force perspective at the discrete level.

We would also like to further investigate the relationship between discrete Dirac integrators and the vast literature on nonholonomic integrators. With any luck, the two approaches to nonholonomic constraints will mutually shed light on one another.

As a practical consideration, the tearing of systems like those in the examples here can lead to new, redundant variables in the interconnected system. Those extra variables have been dealt with on a case by case basis in this study, and our numerical experiments confirm the the monolithic interconnected system with extra variables produces the same results as the full system without extra variables in these cases. We would of course prefer to have a theoretical justification for introducing and working with extra variables in this way. We leave this as future work.

\section{Acknowledgements}

We gratefully acknowledge helpful comments and suggestions of the referee. HP has been supported by the NSF Graduate Research Fellowship grant number DGE-1144086. ML has been supported in part by NSF under grants DMS-1010687, CMMI-1029445, DMS-1065972, CMMI-1334759, DMS-1411792, DMS-1345013.
\bibliography{ThesisBib}
\bibliographystyle{plainnat}

\end{document}